\documentclass[10pt]{amsart}
\parindent=0pt
\usepackage{amssymb, amsmath}
\usepackage{graphics}
\usepackage{latexsym}
\usepackage{amsmath}
\usepackage{amssymb,amsthm,amsfonts}
\usepackage{amscd}
\usepackage[arrow, matrix, curve]{xy}
\usepackage{syntonly}
\usepackage{yfonts}
\usepackage{tikz-cd}
\usepackage{filecontents}
\usepackage{mathtools}
\usepackage{stmaryrd}
\usepackage{MnSymbol}
\ExecuteOptions{dvips} \marginparwidth 0pt \oddsidemargin 1.5 truecm
\evensidemargin 1.5 truecm \marginparsep 1pt \topmargin 1pt
\textheight 22.5 truecm \textwidth 14.5 truecm

\DeclareMathOperator{\spec}{Spec}

\DeclareMathOperator{\aut}{Aut}

\DeclareMathOperator{\pr}{pr}

\DeclareMathOperator{\jac}{Jac} 
\DeclareMathOperator{\pic}{Pic}
\DeclareMathOperator{\nm}{Nm}
\DeclareMathOperator{\deck}{Deck}
\DeclareMathOperator{\irr}{Irr}
\DeclareMathOperator{\cl}{Cliff} 
\DeclareMathOperator{\im}{Im}

\theoremstyle{plain}
\newtheorem{thm}{Theorem}[section]
\newtheorem{theorem}[thm]{Theorem}
\newtheorem{lemma}[thm]{Lemma}

\newtheorem{proposition}[thm]{Proposition}

\theoremstyle{definition}

\newtheorem{definition}[thm]{Definition}

\newtheorem{example}[thm]{Example}

\numberwithin{equation}{thm}

\newcommand{\sF}{{\mathcal F}}

\newcommand{\sO}{{\mathcal O}}
\newcommand{\sP}{{\mathcal P}}

\newcommand{\sT}{{\mathcal T}}


\newcommand{\Hom}{{\rm Hom}}

\begin{document}
\title{On the Prym map of Galois coverings}
\author{Abolfazl Mohajer}
\address{Universit\"{a}t Mainz, Fachbereich 08, Institut f\"ur Mathematik, 55099 Mainz, Germany}
\email{mohajer@uni-mainz.de} 
\subjclass[2010]{14H30, 14H40}
\keywords{Prym variety, Prym map, Galois covering}
\maketitle
\begin{abstract}
In this paper we consider the Prym variety $P(\widetilde{C}/C)$ associated to a Galois coverings of curves $f:\widetilde{C}\to C$ branched at $r$ points. We discuss some properties and equivalent definitions and then consider the Prym map $\mathcal{P}=\mathcal{P}(G,g,r):R(G,g,r)\to A_{p,\delta}$ with $\delta$ the type of the polarization.  For Galois coverings whose Galois group is abelian and metabelian (non-abelian) we show that the differential of this map at certain points is injective. We also consider the Abel-Prym map $u:\widetilde{C}\to P(\widetilde{C}/C)$ and prove some results for its injectivity. In particular we show that in contrast to the classical and cyclic case, the behavior of this map here is more complicated. The theories of abelian and metabelian Galois coverings play a substantial role in our analysis and have been used extensively throughout the paper.
\end{abstract}

\section{Introduction}
To a given finite covering $f:\widetilde{C}\to C$ between non-singular projective algebraic curves (or Riemann surfaces) one can associate a so-called \emph{Prym variety}, a polarized abelian variety: $f$ induces a \emph{norm map}
\begin{align*}
\nm_f:\pic^0(\widetilde{C})\to \pic^0(C)\\
\sum a_ip_i\mapsto \sum a_if(p_i)
\end{align*}
The Prym variety associated to $f$ is then defined as
$P(f)=P(\widetilde{C}/C)=(\ker \nm_f)^0$, i.e., the connected
component of the kernel of $\nm_f$ containing the identity.
Identifying $\pic^0$ with the Jacobian, one sees that the
canonical (principal) polarization of $\jac(\widetilde{C})$
restricts to a polarization on $P(f)$.
\par Classically, $f$ is a double covering which is \'etale or
branched at exactly two points. In these cases, $P(f)$ is known to
be principally polarized. In fact, these are the only cases in
which the polarization on $P(f)$ is principal. However, the type
$\delta$ of the polarization on $P(f)$ depends on the topological
structure of the covering map $f$, see \cite{BL}. \par Let $G$ be
a finite group with $n=|G|$. Consider the following moduli stack
$R(G,g,r)$: Objects are couples $((C,x_1,\dots,
x_r),f:\widetilde{C}\to C)$ such that
\begin{enumerate}
\item $(C,x_1,\dots, x_r)$ is a smooth $r$-pointed curve of genus
$g$ i.e., a point in $M_g(r)$.
\item The group $G$ acts on the smooth curve $\widetilde{C}$ and
the map $f:\widetilde{C}\to \widetilde{C}/G=C$ is the quotient map
branched along the reduced divisor $D=\sum x_i$.
\end{enumerate}
Note that since our problem is insensitive to level structures, we
may actually consider $R(G,g,r)$ as a coarse moduli space. As a result, we omit any assumptions on the automorphism group of the base curve $C$ whose non-triviality can be remedied either by considering the moduli stack or by imposing level structures. In
section 2, we will describe this moduli space intrinsically in
terms of the curve $C$ using the theory of abelian and metabelian
coverings that will be explained in that section.\par Let $p$ be the dimension of the corresponding Prym variety and let
$A_{p,\delta}$ be denote the moduli space of abelian varieties
with polarization of type $\delta$ over $\mathbb{C}$. The above
constructions behave well also in the families of curves and hence
we obtain a morphism
\begin{equation}
\mathcal{P}=\mathcal{P}(G,g,r):R(G,g,r)\to A_{p,\delta}.
\end{equation}
We call the map $\mathcal{P}$ the \emph{Prym map of type
$(G,g,r)$}.
One of our objectives in this paper is to study this map. The Prym map is even in the classical case known to be non-injective which implies that one needs to stduy other closely related aspects, namely the generic injectivity. We therefore study the differential $d\mathcal{P}$ and examine at which points this differential is injective. This will be done in section 3 in which we also use the results of \cite{LO} by Lange and Ortega which partly motivated us to generalize its results to more general Galois coverings. Indeed the key points of the proof of Proposition \ref{dp} are direct generalizations of the results of \cite{LO} to the broader class of Galois covers. In section 3, we first describe the Prym varieties of Galois covers and give some equivalent definitions and prove some properties of this abelian variety. There is also the so-called Abel-Prym map which is induced from the classical Abel-Jacobi map. When the Galois group of the covering is cyclic, it is shown in \cite{LO} that the Abel-Prym map is non-injective at the ramification points of the covering provided that the curve $\widetilde{C}$ is not hyperelliptic. Here we generalize this under more restricitive conditions in particular that the curve $\widetilde{C}$ is not $g_{2^h}^{1}$. We provide an example to show that these conditions are indeed necessary, otherwise the corresponding statement will be false even for very simple covers. \par 
\section{Abelian and metabelian Galois coverings} \label{abeliancovers}
\subsection{Galois covers of curves}
Let us summarize some general facts about Galois coverings of
curves. Let $\widetilde{C},C$ be complex smooth projective
algebraic curves (equivalently Riemann surfaces) and let $f\colon
\widetilde{C}\to C$ be a Galois covering of degree $n$. By this we
mean precisely that there exists a finite group $G$ with $|G|=n$
together with a faithful action of $G$ on $\widetilde{C}$ such
that $f$ realizes $C$ as the quotient of $\widetilde{C}$ by $G$.
Consider the ramification and branch divisors $R,D$ of $f$. Note
that $R$ consists precisely of the points in $\widetilde{C}$ with
non-trivial stabilizers under the action of $G$. The deck
transformation group $\deck(\widetilde{C}/ C)$, i.e., the group of
those automorphisms of $\widetilde{C}$ that are compatible with
$f$ is isomorphic to the Galois group $G$ and acts transitively on
each fiber $f^{-1}(x)$. If $y\in \widetilde{C}$ is a ramification
point with ramification index $e$, then so are all points in the
fiber $f^{-1}(f(y))$. Moreover, the stabilizers of these points in
$\deck(\widetilde{C}/ C)\cong G$ are conjugate cyclic subgroups of
$\deck(\widetilde{C}/ C)$ (or by using the isomorphism
$\deck(\widetilde{C}/ C)\cong G$, subgroups of $G$) of order $e$,
see \cite{Sz}, Proposition 3.2.10. In particular the stabilizer of
a point in $\widetilde{C}$ is trivial, if and only if that point
is \emph{not} a ramification point. The stabilizer $H_y$ of a
point $y\in \widetilde{C}$ is also referred to as  the \emph{inertia subgroup}
of $y$. 
\subsection{Abelian Galois covers}\label{Abelian Galois covers} In this section  we describe the building data of an abelian cover and the construction of the
cover using these data and the relations among them. Notations
come mostly from \cite{P1} in which such Galois coverings
of algebraic varieties have been extensively studied. Let $f:\widetilde{C}\to C$ be a $G$-Galois cover with $G$ finite abelian branched above the points $x_1+\cdots+
x_r$. Since the group $G$ is abelian, the inertia group above a branch point $x_i$ is independent of the chosen ramification point and we denote it by $H_i$.  Then $f_*\mathcal{O}_{\widetilde{C}}=\bigoplus\limits_{\chi \in G^*}
L^{-1}_{\chi}$, where each $L_{\chi}$ is an invertible sheaf on
$C$ on which $G$ acts by character $\chi$. So in particular, the
invariant summand $L_1$ is ismorphic to $\mathcal{O}_C$. The
algebra structure on $f_*\mathcal{O}_{\widetilde{C}}$ is given by
the ($\mathcal{O}_C$-linear) multiplication rule
$m_{\chi,\chi^{\prime}}:L^{-1}_{\chi}\otimes
L^{-1}_{\chi^{\prime}}\to L^{-1}_{\chi\chi^{\prime}}$ and
compatible with the action of $G$. The choice of a primitive
$n$-th root of unity $\xi$ amounts to giving a map $\{1,\dots,
r\}\to G$, the image $h_i$ of $i$ under which is the generator of
the inertia group $H_i$ that is sent to $\xi^{n/n_i}$ by $\chi_i$. The line bundles
$L_{\chi}$ and divisors $x_i$ each labelled with an element $h_i$
as described above are called the building data of the cover.
These data are to satisfy the so-called \emph{fundamental relations} and determine the cover $f:\widetilde{C}\to C$ up to deck automorphisms. Let us write these relations down. For $i=1,\dots, r$ and $\chi\in G^*$, let $a^{i}_{\chi}$ be the smallest positive integer such that $\chi(h_i)=\xi^{na^{i}_{\chi}/n_i}$. For any two characters $\chi,
\chi^{\prime}$, $0\leq a^{i}_{\chi}+a^{i}_{\chi^{\prime}}<2n_i$,
so
\[\epsilon^{i}_{\chi,\chi^{\prime}}=[\frac{a^{i}_{\chi}+a^{i}_{\chi^{\prime}}}{n_i}]=
\begin{cases}
1& a^{i}_{\chi}+a^{i}_{\chi^{\prime}}\geq n_i\\
0, & a^{i}_{\chi}+a^{i}_{\chi^{\prime}}< n_i
\end{cases}\]
and we set $D_{\chi,\chi^{\prime}}=\sum\limits_{i=1}^{r}
\epsilon^{i}_{\chi,\chi^{\prime}}x_i$. By the fundamental
relations of the cover we mean the following isomorphisms:
\begin{align} \label{fundamentalrel}
\mu_{\chi, \chi^{\prime}}:L_{\chi}+L_{\chi^{\prime}}\xrightarrow{\sim}
L_{\chi\chi^{\prime}}\otimes \mathcal{O}_C(D_{\chi,
	\chi^{\prime}}).
\end{align}
In particular, if $\chi^{\prime}=\chi^{-1}$, then
\begin{align} \label{fundamentalrelinverse}
L_{\chi}+L_{\chi^{-1}}\equiv D_{\chi,\chi^{-1}},
\end{align}
and $D_{\chi,\chi^{-1}}$ the sum of the components $x_i$, where
$\chi(h_i)\neq 1$. The cover $f:\widetilde{C}\to C$ can be
recovered from the fundamental relations \ref{fundamentalrel} by
first defining the curve $\widetilde{C}$ inside the vector bundle
$\mathcal{L}=\oplus_{\chi\neq 1} L_{\chi}$ by the equations
\begin{equation} \label{fundamental eqs}
z_{\chi}z_{\chi^{\prime}}=(\prod_i
s_i^{\epsilon^{i}_{\chi,\chi^{\prime}}})z_{\chi\chi^{\prime}}
\end{equation}
where $z_{\chi}$ is the fiber coordinate of the bundle $L_{\chi}$
which can also be viewed as the tautological section of pull-back
of the bundle $L_{\chi}$ to $\mathcal{L}$ and $s_i\in
H^0(C,\mathcal{O}_C(x_i))$ is the (pull-back to $\mathcal{L}$ of
the) defining equation for $D_i$ for $i=1,\dots, r$. This is
naturally a flat $C$-scheme. Conversely, for every choice of the
sections $s_i$, equations \ref{fundamental eqs} define a flat
scheme $\widetilde{C}$ over $C$ which is smooth if and only if
each $D$ is reduced. We therefore have the following fundamental
theorem proven in \cite{P1}.
\begin{theorem} \label{structure abelian}
Let $G$ be a finite abelian group. Let $\widetilde{C}, C$ be
smooth projective algebraic curves and let $f\colon
\widetilde{C}\to C$ be an abelian cover with Galois group $G$.
With the notations as above, the following set of linear
equivalences hold.
\begin{equation} \label{fundamentalrel1}
\mu_{\chi, \chi^{\prime}}:L_{\chi}+L_{\chi^{\prime}}\xrightarrow{\sim} L_{\chi\chi^{\prime}}+ D_{\chi,\chi^{\prime}} \text{     } \forall \chi, \chi^{\prime}\in G^*.
\end{equation}
Conversely, given a set of data $\{L_{\chi}\}_{\chi \in G^*}, \{D_{\chi, \chi^{\prime}}\}$ consisting respectively of invertible sheaves and reduced effective divisors on $C$ satisfying the relation \ref{fundamentalrel1} determines an abelian cover. When $C$ is furthermore complete, this abelian cover is unique.
\end{theorem}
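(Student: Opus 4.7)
The plan is to prove the two directions separately, exploiting the eigenspace decomposition of $f_*\mathcal{O}_{\widetilde{C}}$ for the forward direction and the relative Spec construction using the equations \eqref{fundamental eqs} for the converse. Since $G$ acts on $f_*\mathcal{O}_{\widetilde{C}}$, we may write $f_*\mathcal{O}_{\widetilde{C}}=\bigoplus_{\chi\in G^*}L_\chi^{-1}$, with each $L_\chi^{-1}$ the $\chi$-isotypical component (here we use that $G$ is abelian, so every irreducible complex representation is one-dimensional, and that $|G|$ is invertible in $\mathbb{C}$ so Maschke's theorem applies fiberwise). The $\mathcal{O}_C$-algebra structure is encoded by multiplication maps $m_{\chi,\chi'}\colon L_\chi^{-1}\otimes L_{\chi'}^{-1}\to L_{\chi\chi'}^{-1}$ which are $G$-equivariant; the forward direction amounts to identifying the zero loci of these $m_{\chi,\chi'}$ precisely as the divisors $D_{\chi,\chi'}$.

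To carry out this identification I would work locally at each branch point $x_i$. Choose a uniformizer $t_i$ at $x_i$ and a uniformizer $u$ at a preimage $y\in f^{-1}(x_i)$; since the inertia group $H_i=\langle h_i\rangle$ is cyclic of order $n_i$ acting faithfully on the cotangent space, one may arrange that $h_i$ acts on $u$ by the primitive root $\xi^{n/n_i}$, so $t_i=u^{n_i}$ up to unit. The character $\chi$ then acts on $u^{a_\chi^i}$ by $\chi(h_i)$, which forces the local generator of $L_\chi^{-1}$ near $x_i$ to behave like $u^{a_\chi^i}$. Computing $u^{a_\chi^i}\cdot u^{a_{\chi'}^i}=u^{a_\chi^i+a_{\chi'}^i}$ and comparing with $u^{a_{\chi\chi'}^i}$, the discrepancy is exactly $u^{n_i\epsilon_{\chi,\chi'}^i}=t_i^{\epsilon_{\chi,\chi'}^i}$. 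Dualizing, this yields the isomorphism \eqref{fundamentalrel1} with $D_{\chi,\chi'}=\sum_i\epsilon_{\chi,\chi'}^i x_i$, and patching these local identifications (which agree up to units because they are intrinsic to the $G$-action) gives a global isomorphism.

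For the converse, given the data $(\{L_\chi\},\{D_{\chi,\chi'}\})$ satisfying \eqref{fundamentalrel1}, I would define the quasi-coherent $\mathcal{O}_C$-algebra $\mathcal{A}=\bigoplus_{\chi\in G^*}L_\chi^{-1}$ with multiplication given by dualizing $\mu_{\chi,\chi'}$ and tensoring with the canonical sections $s_i\in H^0(C,\mathcal{O}_C(x_i))$, so that on stalks the product is the rule in \eqref{fundamental eqs}. Associativity of $\mathcal{A}$ reduces to a cocycle identity on the isomorphisms $\mu_{\chi,\chi'}$, which in turn follows because $\epsilon^i_{\chi,\chi'}+\epsilon^i_{\chi\chi',\chi''}=\epsilon^i_{\chi',\chi''}+\epsilon^i_{\chi,\chi'\chi''}$ for every $i$; this integer identity comes from writing both sides as $\lfloor(a^i_\chi+a^i_{\chi'}+a^i_{\chi''})/n_i\rfloor$. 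Then $\widetilde{C}:=\mathbf{Spec}_C(\mathcal{A})$ is a finite flat $C$-scheme of degree $|G|$, and $G^*$ acting on $\mathcal{A}$ by characters yields the $G$-action on $\widetilde{C}$ with $\widetilde{C}/G=C$. Smoothness and reducedness are checked fiberwise over the $x_i$ using that each $D_{\chi,\chi'}$ is reduced, and uniqueness when $C$ is complete follows because the algebra $\mathcal{A}$, together with its $G$-grading, is recovered from $f$ by the forward direction.

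The main obstacle is the local analysis at ramification in the first paragraph above: one must be careful that the identification of $L_\chi^{-1}$ with the sheaf of locally $\chi$-semi-invariant functions is compatible across different characters, so that the $D_{\chi,\chi'}$ really are effective and globally determined. Once this bookkeeping is done, both the associativity cocycle identity and the smoothness verification are straightforward consequences of the combinatorics of the integers $a^i_\chi$ and the reducedness of the branch divisor.
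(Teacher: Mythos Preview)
The paper does not give its own proof of this theorem; it simply states that the result is ``proven in \cite{P1}'' (Pardini), after a short paragraph outlining how the cover is recovered from the equations \eqref{fundamental eqs} as a flat $C$-scheme inside $\mathcal{L}=\bigoplus_{\chi\neq 1}L_\chi$. Your proposal is a correct and reasonably detailed sketch of exactly Pardini's argument---the eigensheaf decomposition, the local computation $u^{a^i_\chi}\cdot u^{a^i_{\chi'}}=t_i^{\epsilon^i_{\chi,\chi'}}u^{a^i_{\chi\chi'}}$ at each branch point, the associativity cocycle identity $\epsilon^i_{\chi,\chi'}+\epsilon^i_{\chi\chi',\chi''}=\epsilon^i_{\chi',\chi''}+\epsilon^i_{\chi,\chi'\chi''}$, and the relative $\mathbf{Spec}$ construction---so there is nothing to correct.
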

Now let the finite abelian group $G$ have the decomposition $G=\langle\sigma_1\rangle\oplus\langle\sigma_2\rangle\oplus\cdots\oplus \langle\sigma_h\rangle$. Let us denote by $G^*$ the character group of of $G$, called
also the dual abelian group. For $g\in G$, we denote by $g^*$ the
corresponding character of $G$.  We have naturally $G^*=\langle\sigma^*_1\rangle\oplus\langle\sigma^*_2\rangle\oplus\cdots\oplus \langle\sigma^*_h\rangle$. These data satisfy 
\begin{equation}\label{reduced building data relations}
L_i^{n_i}\cong\mathcal{O}_C(\sum_{j=1}^n\lambda_{ij}x_j).
\end{equation}
Denote $d_i=\sum_{j=1}^n\lambda_{ij}$. The collection $(\{L_i\}_{1\leq i\leq h}, \{D_i\}_{1\leq i\leq h})$ is called a \emph{reduced building data} for the abelian cover $f:\widetilde{C}\to C$, see \cite{P1}, Definition 2.3. 
\begin{theorem} \label{reduced structure theorem}
Let $C$ be a projective non-singular curve. The reduced building data 
\begin{equation}\label{reduced building data}
(\{L_i\}_{1\leq i\leq h}, \{D_i\}_{1\leq i\leq h})
\end{equation} 
determine the abelian cover $f:\widetilde{C}\to C$ uniquely up to isomorphisms of $G$-covers.
\end{theorem}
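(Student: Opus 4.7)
The plan is to reduce to Theorem \ref{structure abelian}: I would show that the reduced building data uniquely determines, up to $G$-cover isomorphism, the full collection $(\{L_\chi\}_{\chi \in G^*}, \{D_{\chi,\chi'}\})$ of eigensheaves and divisors appearing in the fundamental relations \eqref{fundamentalrel1}, and then invoke that theorem.

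First, since $G^* = \bigoplus_{i=1}^h \langle \sigma_i^* \rangle$, every character $\chi$ has a unique expansion $\chi = \prod_i (\sigma_i^*)^{a_i(\chi)}$ with $0 \leq a_i(\chi) < n_i$. Setting $L_{\sigma_i^*} := L_i$, I would define
\[
L_\chi := \bigotimes_{i=1}^h L_i^{\otimes a_i(\chi)}.
\]
The product $\chi \cdot \chi'$ in $G^*$ amounts to coordinatewise addition modulo $n_i$, and the carries $\epsilon_i(\chi,\chi') := \lfloor (a_i(\chi) + a_i(\chi'))/n_i \rfloor \in \{0,1\}$ record when exponents need to be reduced. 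Invoking the reduced relation $L_i^{\otimes n_i} \cong \mathcal{O}_C(\sum_j \lambda_{ij} x_j)$ to perform this reduction produces the natural isomorphism
\[
L_\chi \otimes L_{\chi'} \cong L_{\chi \chi'} \otimes \mathcal{O}_C\Bigl(\sum_i \epsilon_i(\chi,\chi') \sum_j \lambda_{ij} x_j\Bigr),
\]
from which I would read off $D_{\chi,\chi'} = \sum_{i,j} \epsilon_i(\chi,\chi') \lambda_{ij} x_j$ and the multiplication map $\mu_{\chi,\chi'}$.

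Next, I would verify that the collection $\{\mu_{\chi,\chi'}\}$ just constructed satisfies the cocycle condition required to endow $\bigoplus_\chi L_\chi^{-1}$ with an $\mathcal{O}_C$-algebra structure, so that Theorem \ref{structure abelian} applies. This is a direct computation: associativity of the triple tensor product $L_\chi \otimes L_{\chi'} \otimes L_{\chi''}$ reduces to associativity of addition in $\prod_i \Z/n_i$, and the carry contributions match on both sides because the floor function is additive modulo integer shifts. Once this is in place, Theorem \ref{structure abelian} produces the abelian $G$-cover $f:\widetilde{C}\to C$ from the reconstructed full data, unique up to $G$-cover isomorphism.

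For the converse direction, one observes that any $G$-cover $f$ gives rise to a canonical reduced datum by taking $L_i$ to be the $\sigma_i^*$-eigensheaf of $f_*\mathcal{O}_{\widetilde{C}}$ and reading $D_i$ off the isomorphism $L_i^{\otimes n_i} \cong \mathcal{O}_C(D_i)$, so the passage reduced-data~$\leftrightarrow$~full-data is inverse up to $G$-equivariant isomorphism. The main obstacle is the combinatorial bookkeeping required to verify the cocycle conditions on the $\mu_{\chi,\chi'}$ and to reconcile the carry-based $D_{\chi,\chi'}$ produced above with the branch-point description $D_{\chi,\chi'} = \sum_i \epsilon^{i}_{\chi,\chi'} x_i$ appearing earlier in the section; the two agree because the integers $\lambda_{ij}$ encode exactly the action of $\sigma_i^*$ on the inertia generator at $x_j$, so the two recipes for the carries coincide.
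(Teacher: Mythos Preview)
The paper does not supply its own proof of this statement; it is quoted from Pardini \cite{P1} (where it is Proposition~2.1), so there is no argument in the paper to compare yours against directly.

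Your overall strategy---reconstruct the full data $(\{L_\chi\},\{D_{\chi,\chi'}\})$ from the reduced data and then invoke Theorem~\ref{structure abelian}---is exactly the right one and is what Pardini does. However, the explicit formula $L_\chi=\bigotimes_i L_i^{\otimes a_i(\chi)}$ is wrong in general, and the reconciliation of the two descriptions of $D_{\chi,\chi'}$ that you assert at the end fails. For a concrete counterexample, take $G=\Z/4=\langle\sigma\rangle$ (so $h=1$, order $4$) with a branch point $x$ whose inertia generator is $h_x=\sigma^2$ (local order $2$). Then $a^{x}_{\sigma^*}=1$ and the paper's carry is $\epsilon^{x}_{\sigma^*,\sigma^*}=\lfloor(1+1)/2\rfloor=1$, so $D_{\sigma^*,\sigma^*}=x$ and the actual eigensheaf is $L_{(\sigma^*)^2}\cong L_1^{\otimes 2}(-x)\neq L_1^{\otimes 2}$. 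Your carry, computed modulo the order of $\sigma$ rather than the order of the inertia, is $\lfloor(1+1)/4\rfloor=0$, giving $D'_{\sigma^*,\sigma^*}=0$. The two recipes do not coincide. More generally, your $D'_{\chi,\chi'}=\sum_{i,j}\epsilon_i(\chi,\chi')\lambda_{ij}\,x_j$ need not even be a reduced divisor, whereas the $D_{\chi,\chi'}$ in Theorem~\ref{structure abelian} always are; so the data you produce are not of the form to which that theorem applies.

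The repair is simple in principle: the divisors $D_{\chi,\chi'}$ are determined purely combinatorially by the branch labels $h_j$ (which are part of the reduced data), so one should reconstruct $L_\chi$ inductively by repeated application of \eqref{fundamentalrel1} with these \emph{known} $D_{\chi,\chi'}$, rather than via the naive tensor power. Pardini's closed formula for $L_\chi$ in terms of the $L_i$ contains precisely the divisorial correction term that your formula omits; once that correction is in place, uniqueness follows from Theorem~\ref{structure abelian} as you intended.
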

\subsection{Eigensheaves of the group action} \label{eigenspaces}
Let us describe the eigensheaves $L^{-1}_{\chi}$ in the
decomposition
$f_*\mathcal{O}_{\widetilde{C}}=\bigoplus\limits_{\chi \in G^*}
L^{-1}_{\chi}$. As before, $D$ denotes the branch divisor of $f$
with irreducible components $D_i$. We have already remarked that
the scheme $V$ can be constructed inside the (total space of the)
vector bundle $\mathcal{L}=\bigoplus\limits_{\chi\in
	G^*\setminus\{1\}} L^{-1}_{\chi}$ by the equations
\ref{fundamental eqs} in terms of the tautological section
$z_{\chi}$ of pull-back of the bundle $L_{\chi}$ to $\mathcal{L}$
and the defining equation $s_i\in H^0(C,\mathcal{O}_C(x_i))$ for
$x_i$. One can embed $C$ in $\mathcal{L}$ by the zero section of
$\mathcal{L}\to C$. Let the branch divisor $D$ be reduced. As a
closed subscheme, $C$ is given inside $\mathcal{L}$ by the
equations $z_{\chi}=0$. Let $R=f^{-1}(D)$. Let $p:\mathcal{L}\to
C$ be the bundle projection (we will use the same notation for
$\mathcal{L}$ and its total space). Suppose $H_j=\langle
h_j\rangle, j=1,\dots, s$ are the (non-trivial) inertia groups of
$f$ and let $R_j$ be the divisorial part of the reduced
ramification divisor $R_{red}$ consisting of all those points that
have $H_j$ as their stabilizer. Suppose the ramification index of
$R_j$ is $e_j$. Consider the group of characters $G^*$ and suppose
$\chi_j=h_j^*, j=1,\dots, s$ are the characters corresponding to
the $h_j$. Let $L_j=L_{\chi_j}$ be the line bundles associated to
the $\chi_j$ for $j=1,\dots, s$. It is clear that
$R_{red}=R_1+\cdots+R_s$ and that $R_j$ has its support in $L_j$.
Note that the equations \ref{fundamental eqs} imply that the
${\widetilde{C}}$ has at most singularities over the singular
points of the branch divisor $D$. 
\begin{proposition} \label{stack isom1}
The moduli stack $R(G,g,r)$ is isomorphic to the moduli stack of
the following objects
\[((C,x_1,\dots, x_r),\{L_{\chi}\}_{\chi \in G^*},
\{\mu_{\chi, \chi^{\prime}}\}_{\chi, \chi^{\prime} \in G^*}).\]
Where $(C,x_1,\dots, x_r)$ is a smooth $r$-pointed curve, the
$L_{\chi}$ are line bundle on $C$ for every $\chi \in G^*$ and the
$m_{\chi, \chi^{\prime}}$ are isomorphisms as in . The morphisms
are morphisms of $r$-pointed curves that induce maps of line
bundles compatible with the isomorphisms $\mu_{\chi,\chi^{\prime}}$.
\end{proposition}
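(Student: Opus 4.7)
The plan is to exhibit mutually quasi-inverse morphisms between the two moduli stacks, constructed essentially by reading Theorem \ref{structure abelian} functorially in families. The content is largely a repackaging of the structure theorem; the real work is to check the compatibility with morphisms so that this is genuinely a $2$-equivalence of stacks and not just a bijection on isomorphism classes of objects.

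First I would define the forward functor $\Phi$. Given a family $((C,x_1,\dots,x_r),f\colon \widetilde C\to C)$ over a base $S$, form the decomposition into isotypical components
\begin{equation*}
f_*\mathcal{O}_{\widetilde C}=\bigoplus_{\chi\in G^*}L_\chi^{-1},
\end{equation*}
which is available in families because $f$ is finite flat and $|G|$ is invertible on our base (we work over $\mathbb{C}$). The $\mathcal{O}_C$-algebra structure on $f_*\mathcal{O}_{\widetilde C}$ yields the multiplication maps $m_{\chi,\chi'}\colon L_\chi^{-1}\otimes L_{\chi'}^{-1}\to L_{\chi\chi'}^{-1}$, and the condition that $\widetilde C$ be smooth with reduced branch $\sum x_i$ forces these maps to vanish exactly along $D_{\chi,\chi'}=\sum_i\varepsilon_{\chi,\chi'}^i x_i$, so that the dual maps $\mu_{\chi,\chi'}$ are the isomorphisms of (\ref{fundamentalrel}). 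A morphism $((C,x_i),f)\to ((C',x_i'),f')$ consists of a morphism of pointed curves together with an equivariant lift; pushing down gives an isomorphism of the decomposed sheaves, hence the required morphism of building data.

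Next I would construct the inverse functor $\Psi$. Given $((C,x_i),\{L_\chi\},\{\mu_{\chi,\chi'}\})$, the isomorphisms $\mu_{\chi,\chi'}$, together with the choice of sections $s_i\in H^0(C,\mathcal{O}_C(x_i))$ cutting out the $x_i$, endow $\bigoplus_\chi L_\chi^{-1}$ with an $\mathcal{O}_C$-algebra structure; taking the relative spectrum produces $\widetilde C:=\mathrm{Spec}_C(\bigoplus_\chi L_\chi^{-1})\to C$. Theorem \ref{structure abelian} guarantees that this is a $G$-Galois cover branched along $\sum x_i$, with $\widetilde C$ smooth because the $x_i$ are reduced and distinct. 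Functoriality of $\mathrm{Spec}_C$ makes $\Psi$ a morphism of stacks. The fact that the reconstructed $\widetilde C$ is independent of the auxiliary choice of the $s_i$, up to canonical isomorphism of $G$-covers, is precisely the uniqueness clause in Theorems \ref{structure abelian} and \ref{reduced structure theorem}.

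Finally I would check $\Phi\circ\Psi\cong\mathrm{id}$ and $\Psi\circ\Phi\cong\mathrm{id}$. The composition $\Phi\circ\Psi$ recovers the building data because $(\pi_*\mathcal{O}_{\mathrm{Spec}_C\mathcal{A}})=\mathcal{A}$ for any finite $\mathcal{O}_C$-algebra $\mathcal{A}$, and the induced multiplication maps coincide with the original $\mu_{\chi,\chi'}$ by construction. For $\Psi\circ\Phi$, the map $\widetilde C\to \mathrm{Spec}_C(f_*\mathcal{O}_{\widetilde C})$ is a canonical isomorphism since $f$ is affine. Both natural isomorphisms are $G$-equivariant and compatible with morphisms in each stack, giving the equivalence. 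The main obstacle I expect is purely bookkeeping: tracking that a morphism of pointed curves on the base level is compatible with a prescribed morphism of building data in a way that uniquely determines the lift to the covering side, which amounts to noting that $G$-equivariant morphisms of covers correspond bijectively to morphisms of the associated graded $\mathcal{O}_C$-algebras, a statement that is formal once the isotypic decomposition is available.
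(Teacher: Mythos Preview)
The paper does not supply a proof of this proposition: it is stated immediately after the discussion of eigensheaves and then used without further argument, the implicit justification being Theorem~\ref{structure abelian} (Pardini's structure theorem). Your proposal is correct and is exactly the natural way to make that implication explicit---building the two functors from the decomposition $f_*\mathcal{O}_{\widetilde C}=\bigoplus_\chi L_\chi^{-1}$ and from the relative $\mathrm{Spec}$, then invoking the existence/uniqueness clauses of Theorem~\ref{structure abelian} to see they are quasi-inverse---so there is nothing to compare beyond noting that you have written out what the paper takes for granted.
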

Proposition \ref{stack isom1} has the following consequence:
Consider the forgetful map
\begin{gather}\label{forgetful}
R(G,g,r)\to M_g(r),\\
((C,x_1,\dots, x_r),\{L_{\chi}\}_{\chi \in G^*}, \{\mu_{\chi,
\chi^{\prime}}\}_{\chi, \chi^{\prime} \in G^*})\mapsto
(C,x_1,\dots, x_r)
\end{gather}
Proposition \ref{stack isom1} has the following consequence:
Consider the forgetful map
\begin{gather}\label{forgetful}
	R(G,g,r)\to M_g(r),\\
	((C,x_1,\dots, x_r),\{L_{\chi}\}_{\chi \in G^*}, \{\mu_{\chi,
		\chi^{\prime}}\}_{\chi, \chi^{\prime} \in G^*})\mapsto
	(C,x_1,\dots, x_r)
\end{gather}
This map exhibits $R(G,g,r)$ as a principal homogenous space (or
torsor) with the structure group $\Hom(G^*,\pic^0(C))$ at a given
point $((C,x_1,\dots, x_r),\{L_{\chi}\}_{\chi \in G^*}, \{m_{\chi,
	\chi^{\prime}}\}_{\chi, \chi^{\prime} \in G^*})\in R(G,g,r)$.
\subsection{Metabelian Galois covers} \label{Metabelian Galois covs}
In this subsection $G$ denotes a finite metabelian group. For a detailed treatment of metabelian covers we refer to \cite{M19} whose results and notations will be used throughout the paper. First of all, we have the following. First of all, we have the following.
\begin{definition} \label{defmetabelian}
A finite group $G$ is called metabelian (resp. metacyclic) if it
sits in the following short exact sequence of groups.
\begin{equation} \label{metabelian extension}
0\to A\to G\to N\to 0,
\end{equation}
where $A$ and $N$ are abelian (resp. cyclic). In other words a
group is metabelian (resp. metacyclic) if it has an a normal
subgroup $A$ such that both $A$ and $N=G/A$ are abelian (resp.
cyclic).
\end{definition}
Notice that the above definition is equivalent to saying that
metabelian groups are precisely the solvable groups of derived
length at most 2.\par It is straightforward to see that Definition
\ref{defmetabelian} implies that $G$ is metabelian if and only if
$G$ has the following presentation
\begin{equation} \label{presentation metabelian}
\langle \sigma_1,\dots,\sigma_s, \tau_1,\dots,\tau_l\mid
\sigma_i\sigma_j=\sigma_j\sigma_i, \tau_i\tau_j=\tau_j\tau_i,
\sigma_i^{m_i}=1, \sigma_i\tau_j=\tau_j\sigma^{r_{1ij}}_1\cdots
\sigma^{r_{sij}}_s, \tau_j^{a_j}=\sigma^{k_{1j}}_1\cdots
\sigma^{k_{sj}}_s\rangle
\end{equation}
Here $A=\langle \sigma_1,\dots,\sigma_s \rangle$ and
$N=\langle\overline{\tau}_1,\dots,\overline{\tau}_l \rangle$, and
$\overline{\tau}_j$ denotes the image of $\tau_j$ in $N=G/A$.\par
Now Let $G$ be a finite metabelian group as above, $X$ a smooth
algebraic curve (equivalently a Riemann surface) over $\mathbb{C}$
with $G\subset \aut(X)$ and $Y$ a smooth algebraic curve such that
$X/G=Y$ and the cover $\pi: X\to Y$ is Galois, i.e., $\pi$ is the
quotient map $X\to X/G$. The factorization \ref{metabelian extension} gives rise to a factorization, $\pi \colon
X\xrightarrow{p}Z\xrightarrow{q} Y$, where $p,q$ are the
corresponding intermediate abelian Galois covers, i.e., $p\colon
X\to Z=X/A$ is an abelian Galois covering with Galois group $A$
and $q\colon Z\to Y=Z/N$ is an abelian Galois covering with Galois
group $N$. Therefore to study the Galois covering $\pi \colon X\to
Y$, it is helpful to study these intermediate abelian coverings.
In \cite{M19}, the metabelian Galois coverings of algebraic
varieties have been analyzed using the theory of abelian Galois
coverings that we explained in Section \ref{abeliancovers}
developed in particular in \cite{P1}. Explicitely, by Theorem
\ref{structure abelian}, the cover $p\colon X\to Z$ is determined
by the existence of invertible sheaves $(\mathcal{F}_{\chi})_{\chi
	\in A^*}$, and reduced effective divisors $(D_i)$ without common
components on $Z$ such that \ref{fundamentalrel1} holds. Note the
multiplication map
$m_{\chi\chi^{\prime}}:\mathcal{F}_{\chi}\otimes
\mathcal{F}_{\chi{\prime}}\to \mathcal{F}_{\chi\chi^{\prime}}$.
Before stating one of the main structure theorems of \cite{M19},
let us introduce the following notation: Suppose $\chi$ is an
irreducible character of the abelian group $A=\langle
\sigma_1,\dots,\sigma_s \rangle$. Let $\tau_j\in N$. Since $A$ is
a normal subgroup of $G, \tau_j^{-1}\sigma_u\tau_j\in A$ for every
$u=1,\dots, s$. We define a new character $\chi^{(1)}_j$ of $A$
by $\chi^{(1)}_j(\sigma_u)=\chi(\tau_j^{-1}\sigma_u\tau_j)$ for
every $u=1,\dots, s$. Since $\chi$ is an irreducible character,
$\chi^{(1)}_j$ is also irreducible. In particular for each
$\gamma\in \mathbb{N}$ one can define a character
$\chi^{(\gamma)}_{j}$ of $A$ by setting
$\chi^{(\gamma)}_{j}(\sigma_u)=\chi(\tau_j^{-\gamma}\sigma_u\tau_j^{\gamma})$.
By presentation \ref{presentation metabelian}, it is clear that
$\chi^{(a_j)}_{j}=\chi.$ \par Now, we are ready to state our
structure theorem for metabelian Galois covers.
\begin{theorem} \label{structure of metabelian}(Structure theorem
for metabelian covers) A metabelian Galois cover $\pi \colon X\to
Y$ is determined by the following data:
\begin{enumerate}
	\item Line bundles $(L_{\eta})_{\eta\in N^*}$ and effective
	divisors $B_1,\dots, B_l$ on $Y$ such that
	$\phi_{\eta, \eta^{\prime}}:L_{\eta}+L_{\eta^{\prime}}\xrightarrow{\sim} \sO_C(\sum
	\epsilon^{i}_{\eta\eta^{\prime}} B_i)$. \item Reduced effective
	weil divisors $D_1,\dots, D_n$ on $Z=\spec (\oplus
	L_{\eta}^{-1})$ identifying the character $\chi_i$ with $i$,
	$\overline{\tau_j}(D_i)=D_{\chi^{(1)}_{ij}}$, where
	$\chi^{(1)}_{ij}$ is the character of $A$ associated to $\chi_i$
	defined above. \item Invertible sheaves
	$\mathcal{F}_{\chi_1},\dots, \mathcal{F}_{\chi_n}$ on $Z$ such
	that the linear equivalence \ref{fundamentalrel1} holds and for
	every $\gamma\in \mathbb{N}$,
	$\overline{\tau_j}^{\gamma}(\mathcal{F}_{\chi_i})=\mathcal{F}_{\chi^{(\gamma)}_{ij}}$,
	where $\chi^{(\gamma)}_{ij}$ is defined above. Furthermore,
	$\overline{\tau_j}^{a_j}$ acts on the local sections of
	$\mathcal{F}_{\chi_i}$ as multiplication by
	$\exp(\frac{2\pi\sqrt{-1}\delta_{ij}}{m_i})$.
\end{enumerate}
\end{theorem}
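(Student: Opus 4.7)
The plan is to proceed via the factorization of the metabelian cover $\pi$ into two abelian covers and then apply Theorem \ref{structure abelian} to each floor, while carefully tracking the extra compatibility data needed to recover the full $G$-action (rather than just an independent pair of $A$- and $N$-actions). First I would start from a metabelian $G$-cover $\pi:X\to Y$ and use the short exact sequence $0\to A\to G\to N\to 0$ from Definition \ref{defmetabelian} to factor $\pi=q\circ p$, with $p:X\to Z=X/A$ an abelian $A$-cover and $q:Z\to Y=Z/N$ an abelian $N$-cover. Applying Theorem \ref{structure abelian} to $q$ produces the line bundles $(L_\eta)_{\eta\in N^*}$ and effective divisors $B_1,\dots,B_l$ on $Y$ satisfying the fundamental relations of item (1). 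Applying Theorem \ref{structure abelian} to $p$ produces the invertible sheaves $\mathcal{F}_{\chi_1},\dots,\mathcal{F}_{\chi_n}$ and the reduced effective divisors $D_1,\dots,D_n$ on $Z$ satisfying the fundamental relations in item (3).

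Next I would identify the extra structure that promotes such a pair of independent abelian covers to an honest $G$-cover. Because $A$ is normal in $G$, the quotient $N$ acts on $A$ by conjugation, and this conjugation action is precisely what is encoded in the twisted characters $\chi^{(\gamma)}_{ij}$ introduced before the statement. Geometrically, this means the $A$-cover $p:X\to Z$ is $N$-equivariant with respect to the $N$-action on $Z$, and I would translate this equivariance into the two stated conditions: (a) pullback of the branch divisors along $\overline{\tau_j}$ permutes them according to the rule $\overline{\tau_j}(D_i)=D_{\chi^{(1)}_{ij}}$, since conjugating an inertia generator rotates it into the inertia of another branch component; (b) the analogous rule $\overline{\tau_j}^{\gamma}(\mathcal{F}_{\chi_i})=\mathcal{F}_{\chi^{(\gamma)}_{ij}}$ holds for the $\chi_i$-eigenspaces of $p_*\sO_X$, because conjugation by $\tau_j^{\gamma}$ carries the $\chi_i$-isotypic component onto the $\chi^{(\gamma)}_{ij}$-isotypic component.

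The most delicate point, and what I expect to be the main obstacle, is the final monodromy clause: that $\overline{\tau_j}^{a_j}$ acts on local sections of $\mathcal{F}_{\chi_i}$ as multiplication by $\exp(2\pi\sqrt{-1}\delta_{ij}/m_i)$. This clause exists precisely to encode the relation $\tau_j^{a_j}=\sigma_1^{k_{1j}}\cdots\sigma_s^{k_{sj}}$ from the presentation \ref{presentation metabelian}: since $\tau_j^{a_j}$ already lies in $A$, it must act on the $\chi_i$-eigenspace as the scalar $\chi_i(\tau_j^{a_j})=\prod_u\chi_i(\sigma_u)^{k_{uj}}$, and writing this as a root of unity produces a well-defined exponent $\delta_{ij}\pmod{m_i}$ which I would take as the definition. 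Checking that this scalar coincides with the $a_j$-fold iterate of the $N$-pullback isomorphism on $\mathcal{F}_{\chi_i}$ is what pins down the noncommutativity in the group and cannot be derived from the abelian structure theorems alone.

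For the converse, given data $(1)$–$(3)$, I would first invoke Theorem \ref{structure abelian} on the data in $(1)$ to reconstruct $q:Z\to Y$, then invoke it on the data in $(3)$ to build $p:X\to Z$. The equivariance conditions in $(2)$ and in $(3)$ produce a canonical lift of each generator $\overline{\tau_j}$ of $\deck(Z/Y)$ to an automorphism $\widetilde{\tau_j}$ of $X$ that intertwines the $A$-eigensheaves correctly; combined with the tautological $A$-action on $X$, this yields automorphisms satisfying $\widetilde{\tau_j}\sigma_u\widetilde{\tau_j}^{-1}=\sigma^{r_{1uj}}_1\cdots\sigma^{r_{suj}}_s$ by construction of the $\chi^{(1)}_{ij}$, while the monodromy clause forces $\widetilde{\tau_j}^{a_j}=\sigma^{k_{1j}}_1\cdots\sigma^{k_{sj}}_s$. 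Hence the $\widetilde{\tau_j}$ and $\sigma_u$ assemble into a faithful action of the group $G$ with presentation \ref{presentation metabelian}, and uniqueness up to $G$-isomorphism is inherited from the uniqueness clauses of Theorem \ref{structure abelian} on each floor together with the rigidity provided by the intertwining conditions.
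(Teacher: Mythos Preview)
The paper does not actually prove this theorem: immediately after the statement it writes ``For a proof of this theorem see \cite{M19}, Theorem 3.2.'' So there is no in-paper argument to compare against; the result is imported wholesale from the cited reference.

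Your outline is the natural one and almost certainly coincides with the approach in \cite{M19}: factor $\pi$ through $Z=X/A$, apply Theorem \ref{structure abelian} to each abelian floor, and then record the $N$-equivariance of the $A$-cover as the permutation rules on the $D_i$ and $\mathcal{F}_{\chi_i}$ together with the scalar monodromy condition encoding the relation $\tau_j^{a_j}\in A$. Your reading of the clause $\overline{\tau_j}^{a_j}$ acting by $\exp(2\pi\sqrt{-1}\delta_{ij}/m_i)$ as $\chi_i(\tau_j^{a_j})$ is the right interpretation, and is indeed the piece that distinguishes the extension class of $G$ among all extensions of $N$ by $A$ with the given conjugation action. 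One point you pass over quickly in the converse direction deserves a sentence more of care: the lift $\widetilde{\tau_j}$ of $\overline{\tau_j}$ to $\aut(X)$ is a priori only determined up to composition with an element of $A=\deck(X/Z)$, and you should check that the monodromy clause in (3) is exactly what pins this ambiguity down (or, equivalently, that different choices of lift yield $G$-isomorphic covers). Apart from that, your plan is sound and matches what the cited reference is expected to contain.
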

For a proof of this theorem see \cite{M19}, Theorem 3.2. With the
notations of the beginning of \S \ref{eigenspaces}, let $L^{\prime}=\otimes_{j=1}^s L_j^{e_j-1}$ and let $\pi \colon X\xrightarrow{p}Z\xrightarrow{q} Y$ be the factorization  introduced in \S \ref{Metabelian Galois covs}. Then \cite{M19}, 3.3.1 gives:
\begin{equation}\label{metabelian invariant}
\begin{aligned}
\pi_*\omega_X=q_*(p_*\omega_X)=q_*(\omega_Z\otimes
p_*\mathcal{O}_X) =q_*(q^*(\omega_Y\otimes L^{\prime}\otimes (\oplus \mathcal{F}_{\chi}))= \\
q_*((\oplus \mathcal{F}_{\chi})\otimes q^*(\omega_Y\otimes
L^{\prime}))=q_*(\oplus \mathcal{F}_{\chi})\otimes \omega_Y\otimes L^{\prime}=\\
(\oplus U_{\chi})\otimes \omega_Y\otimes L^{\prime}=\oplus
(\omega_Y\otimes L^{\prime}\otimes U_{\chi})
\end{aligned}
\end{equation}
Where $U_i=q_*(\mathcal{F}_{\chi})$. As for the abelian case, Theorem \ref{structure of metabelian} yields the following isomorphism of the stack $R(G,g,r)$.
\begin{proposition} \label{stack isom2}
The moduli stack $R(G,g,r)$ is isomorphic to the moduli stack of the following objects
\[((C,x_1,\dots, x_r),\{L_{\eta}\}_{\eta \in N^*},
\{\phi_{\eta, \eta^{\prime}}\}_{\eta, \eta^{\prime} \in N^*}, \{\mathcal{F}_{\chi}\}_{\chi \in A^*}, \{\mu_{\chi, \chi^{\prime}}\}_{\chi, \chi^{\prime} \in A^*}).\]
Where $(C,x_1,\dots, x_r)$ is a smooth $r$-pointed curve, the $L_{\eta}$ are line bundle on $C$ for every $\eta \in N^*$ and the $\phi_{\eta, \eta^{\prime}}$ are isomorphisms as in \ref{fundamentalrel1}. The $\mathcal{F}_{\chi}$ are line bundle on $C$ for every $\chi \in A^*$ and the $\mu_{\chi, \chi^{\prime}}$ are isomorphisms. The morphisms are morphisms of $r$-pointed curves that induce maps of line bundles compatible with the isomorphisms $\mu_{\chi,
\chi^{\prime}}$.
\end{proposition}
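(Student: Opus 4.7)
The plan is to construct two mutually inverse functors between $R(G,g,r)$ and the moduli stack of the tuples $((C,x_1,\dots,x_r), \{L_\eta\}, \{\phi_{\eta,\eta'}\}, \{\mathcal{F}_\chi\}, \{\mu_{\chi,\chi'}\})$, closely imitating the pattern of Proposition \ref{stack isom1} but using Theorem \ref{structure of metabelian} in place of Theorem \ref{structure abelian}. Throughout, I interpret the sheaves $\mathcal{F}_\chi$ as living on the intermediate cover $Z$ (the natural location coming from Theorem \ref{structure of metabelian}(3)), which is itself canonically built from the $L_\eta$ in the preceding step.

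In one direction, given an object $((C,x_1,\dots,x_r), f:\widetilde{C}\to C)$ of $R(G,g,r)$, I would take the canonical factorization $f: \widetilde{C} \xrightarrow{p} Z \xrightarrow{q} C$ associated to the metabelian extension \eqref{metabelian extension}, with $Z = \widetilde{C}/A$. Applying Theorem \ref{structure abelian} to the $N$-Galois cover $q$ yields the invertible sheaves $L_\eta$ on $C$ and the fundamental-relation isomorphisms $\phi_{\eta,\eta'}$; applying the same theorem to the $A$-Galois cover $p$ yields the sheaves $\mathcal{F}_\chi$ on $Z$ together with the multiplication isomorphisms $\mu_{\chi,\chi'}$. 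Functoriality with respect to morphisms of $r$-pointed curves (and the required compatibility with $\mu_{\chi,\chi'}$) is automatic from the functoriality built into the abelian structure theorems.

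In the opposite direction, given such a tuple, I would reconstruct the cover in two stages: first build the intermediate $N$-cover $q:Z\to C$ from $(\{L_\eta\}, \{\phi_{\eta,\eta'}\})$ via Theorem \ref{structure abelian}; then, viewing $(\{\mathcal{F}_\chi\}, \{\mu_{\chi,\chi'}\})$ as abelian-cover data on $Z$ subject to the equivariance conditions of parts (2) and (3) of Theorem \ref{structure of metabelian}, build the abelian $A$-cover $p:\widetilde{C}\to Z$, and set $f = q\circ p$.

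The main obstacle, and the point at which the argument genuinely uses the metabelian (rather than merely two-step abelian) structure, will be verifying that the composite $f:\widetilde{C}\to C$ is Galois with structure group precisely $G$ in the correct extension class \eqref{metabelian extension}, and not some other extension of $N$ by $A$. The equivariance conditions $\overline{\tau_j}^\gamma(\mathcal{F}_{\chi_i}) = \mathcal{F}_{\chi^{(\gamma)}_{ij}}$ together with the multiplication-by-$\exp(2\pi\sqrt{-1}\delta_{ij}/m_i)$ condition in Theorem \ref{structure of metabelian}(3) are precisely what I would invoke here: they force the $N$-action on $Z$ to lift compatibly to $\widetilde{C}$ so that, combined with the $A$-action by deck transformations of $p$, one obtains a $G$-action on $\widetilde{C}$ realizing the presentation \eqref{presentation metabelian}. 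The remaining verification that the two functors are mutually inverse, on both objects and morphisms, is then a formal consequence of the uniqueness statements already present in Theorems \ref{structure abelian} and \ref{structure of metabelian}.
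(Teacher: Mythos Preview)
The paper does not give a proof of this proposition at all: it merely states that ``As for the abelian case, Theorem \ref{structure of metabelian} yields the following isomorphism of the stack $R(G,g,r)$'' and then records the statement. Your proposal is precisely the natural unpacking of that sentence---build the two functors from the two-step factorization and invoke the uniqueness clauses in the structure theorems---so your approach is correct and coincides with what the paper intends. Your observation that the $\mathcal{F}_\chi$ must be read as sheaves on the intermediate curve $Z$ rather than on $C$ (despite the wording of the proposition) is also right, and is in fact forced by Theorem \ref{structure of metabelian}(3).
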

\section{Prym varieties and the Prym map}
\subsection{Generalities} \label{generalities}
In general, one can associate a Prym variety to a subtorus of an
abelian variety $A$. We follow \cite{RR}. Let $A$ be a principally
polarized abelian variety over $\mathbb{C}$ and let
$X\xrightarrow{i} A$ be a subtorus. We denote by
$\lambda:A\to \widehat{A}$ the principal polarization of $A$. The
Prym variety of $X$ in $A$ is defined as
$P=P(A,\lambda,X)=\lambda^{-1}(\ker\widehat{i})$. \par Now let
$f:\widetilde{C}\to C$ be a covering map between smooth algebraic
curves as in introduction. \par Let us denote the Jacobians of the
curves $\widetilde{C}$ and $C$ respectively by $\widetilde{J}$ and
$J$. Note that by definition, if $R$ is a Riemann surface,
\[J(R)=\jac(R)=H^0(R,\omega_{R})^*/H_1(R,\mathbb{Z}).\]
Since the finite group $G$ acts on $\widetilde{C}$ it also acts on
the space of differential 1-forms
$H^0(\widetilde{C},\omega_{\widetilde{C}})$ and
$H_1(\widetilde{C},\mathbb{Z})$ and hence on the Jacobian
$\widetilde{J}$. In particular, we denote by $\widetilde{J}^G$ the
subgroup of fixed points of $\widetilde{J}$ under the action of
$G$. The following theorem is proven in \cite{RR} (repectively,
Theorem 2.5 and Proposition 3.1).
\begin{theorem} \label{Subtorus-jacobian}
\begin{enumerate}
\item $P=P(\widetilde{C}/C)$ is the Prym variety of the abelian subvariety $f^*J$ of the principally polarized abelian variety $\widetilde{J}$, i.e., $P=P(\widetilde{C}/C)=P(\widetilde{J},\widetilde{\lambda},f^*J)$.
\item $f^*J=(\widetilde{J}^G)^0$.\item The map $f$ induces an isogeny $J\times P(\widetilde{C}/C)\sim \widetilde{J}$
\end{enumerate}
\end{theorem}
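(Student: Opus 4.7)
The plan is to prove the three statements in order, using the norm/pullback duality between Jacobians together with a dimension count for the fixed part of $\widetilde J$ under $G$.

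For part (1), I would use the standard compatibility between norm and pullback under the canonical polarizations. Writing $\lambda_J$ and $\widetilde\lambda$ for the principal polarizations and $\widehat{f^*}$ for the dual of $f^*\colon J\to \widetilde J$, the identity
\[
\lambda_J\circ \nm_f \;=\; \widehat{f^*}\circ \widetilde\lambda
\]
is classical (it expresses that $\nm_f$ and $f^*$ are dual morphisms under the principal polarizations). Hence $\ker\nm_f=\widetilde\lambda^{-1}(\ker \widehat{f^*})$, and since $f^*=i\circ j$ with $i\colon f^*J\hookrightarrow \widetilde J$ the inclusion and $j\colon J\to f^*J$ the surjective isogeny onto the image, one has $(\ker\widehat{f^*})^{0}=(\ker\widehat i)^{0}$. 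Taking connected components and comparing with the definition $P(\widetilde J,\widetilde\lambda,f^*J)=\widetilde\lambda^{-1}(\ker \widehat i)$ from \S\ref{generalities} gives $P(\widetilde C/C)=P(\widetilde J,\widetilde\lambda,f^*J)$.

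For part (2), the inclusion $f^*J\subseteq (\widetilde J^G)^{0}$ is immediate, since for every $\sigma\in G$ the composition $f\circ\sigma$ equals $f$, and $f^*J$ is already connected. For the reverse inclusion I argue by dimensions. Because $\nm_f\circ f^*=[n]$ with $n=|G|$, the kernel of $f^*$ is finite and so $\dim f^*J = g$. On the other hand, the Lie algebra of the closed subgroup $\widetilde J^G$ is the $G$-fixed part of $\mathrm{Lie}\,\widetilde J \cong H^1(\widetilde C,\sO_{\widetilde C})$, hence
\[
\dim (\widetilde J^G)^{0} \;=\; \dim H^0(\widetilde C,\omega_{\widetilde C})^{G} \;=\; \dim H^0(C,(f_*\omega_{\widetilde C})^{G}) \;=\; g,
\]
the last equality being the standard identification $(f_*\omega_{\widetilde C})^{G}=\omega_C$ given by the trace map for a finite Galois cover of smooth curves in characteristic zero. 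The two abelian varieties have the same dimension and one is contained in the other, so they coincide. Part (3) is then essentially formal: on $f^*J$ one has $\nm_f=[n]$, so $f^*J\cap \ker\nm_f\subseteq (f^*J)[n]$ is finite, and a fortiori $f^*J\cap P$ is finite, making the addition map $\mu\colon f^*J\times P\to \widetilde J$ have finite kernel. By part (2), $\dim f^*J+\dim P=g+(\widetilde g-g)=\dim\widetilde J$, so $\mu$ is an isogeny; precomposing with $f^*\times \id$ yields the desired isogeny $J\times P(\widetilde C/C)\sim \widetilde J$.

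The main obstacle is part (2): the equality $(f_*\omega_{\widetilde C})^{G}=\omega_C$, while classical, is more delicate than the analogue $(f_*\sO_{\widetilde C})^{G}=\sO_C$ because the trace map for $\omega$ interacts non-trivially with the ramification divisor $R$. Once this ingredient is granted, parts (1) and (3) are a straightforward consequence of duality and of the complementarity of abelian subvarieties in a polarized abelian variety.
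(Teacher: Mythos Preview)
Your argument is sound: the duality identity $\lambda_J\circ\nm_f=\widehat{f^*}\circ\widetilde\lambda$ gives part (1) after a careful tracking of connected components (noting that $\ker\widehat i$ is already connected, being the kernel of a surjection of abelian varieties), the dimension count via $(f_*\omega_{\widetilde C})^G\cong\omega_C$ settles part (2), and part (3) is the standard complementarity of $f^*J$ and $P$ inside $\widetilde J$.

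However, there is nothing to compare against: the paper does not prove this theorem at all. It is stated with the preamble ``The following theorem is proven in \cite{RR} (respectively, Theorem 2.5 and Proposition 3.1)'' and used as a black box thereafter. So rather than reproducing or paralleling the paper's argument, you have supplied a self-contained proof where the paper simply imports the result from Recillas--Rodr\'iguez. Your approach is in fact the same as the one in \cite{RR} in outline; the only point you flag as delicate---the isomorphism $(f_*\omega_{\widetilde C})^G\cong\omega_C$ in the presence of ramification---is indeed where one has to be a little careful, but over $\mathbb C$ it follows from the trace pairing (or, as the paper itself uses elsewhere, from the eigensheaf decomposition of $f_*\omega_{\widetilde C}$, whose invariant summand is $\omega_C$).
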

We note that the isogeny mentioned in Theorem \ref{Subtorus-jacobian} is given by
\begin{gather}\label{isogeny equation}
\phi:J\times P(\widetilde{C}/C)\to \widetilde{J}\nonumber\\
\phi(c,\widetilde{c})=f^*c+\widetilde{c}
\end{gather}
By the above mentioned $G$-action on $H^0(\widetilde{C},\omega_{\widetilde{C}})$ and
$H_1(\widetilde{C},\mathbb{Z})$,  we set:
\begin{equation} \label{plus minus}
H^0(\widetilde{C},\omega_{\widetilde{C}})^+=H^0(\widetilde{C},\omega_{\widetilde{C}})^G(\cong H^0(C,\omega_{C})) \text{ and
}H^0(\widetilde{C},\omega_{\widetilde{C}})^-=H^0(\widetilde{C},\omega_{\widetilde{C}})/H^0(\widetilde{C},\omega_{\widetilde{C}})^+= \bigoplus\limits_{\chi \in \irr(G)\setminus\{1\}}H^0(\widetilde{C},\omega_{\widetilde{C}})^{\chi}
\end{equation}
Notice that
$H^0(\widetilde{C},\omega_{\widetilde{C}})=H^0(\widetilde{C},\omega_{\widetilde{C}})^+\oplus
H^0(\widetilde{C},\omega_{\widetilde{C}})^-$. \par The following
lemma is then an immediate consequence of Theorem
\ref{Subtorus-jacobian} above.
\begin{lemma} \label{prymvar}
Let $f:\widetilde{C}\to C$ be a Galois covering, then 
\begin{equation} \label{Prymdef}
P(\widetilde{C}/C)={H^0(\widetilde{C},\omega_{\widetilde{C}})^-}^*/H_1(\widetilde{C},\mathbb{Z})^-
\end{equation}
\end{lemma}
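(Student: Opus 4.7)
The plan is to invoke Theorem \ref{Subtorus-jacobian}, which identifies $P(\widetilde{C}/C)$ with the subtorus Prym $P(\widetilde{J},\widetilde{\lambda},f^*J)$, and then to describe that subtorus explicitly via the $G$-isotypical decomposition on Hodge data and on homology. Throughout I use the standard uniformization $\widetilde{J} = H^0(\widetilde{C},\omega_{\widetilde{C}})^*/H_1(\widetilde{C},\mathbb{Z})$, with $G$ acting compatibly on numerator and denominator.

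First I would form the splittings induced by the norm idempotent $p_G = \frac{1}{|G|}\sum_{g\in G}g$. On $H^0(\widetilde{C},\omega_{\widetilde{C}})$ this recovers the decomposition of \eqref{plus minus}, and on $H_1(\widetilde{C},\mathbb{Q})$ it gives the analogous splitting into $(+)$ and $(-)$ pieces. I set $H_1(\widetilde{C},\mathbb{Z})^\pm := H_1(\widetilde{C},\mathbb{Z})\cap H_1(\widetilde{C},\mathbb{Q})^\pm$, which are saturated sublattices of full rank in their respective rational subspaces. By Theorem \ref{Subtorus-jacobian}(2), $f^*J$ is identified inside $\widetilde{J}$ with $H^0(\widetilde{C},\omega_{\widetilde{C}})^{+*}/H_1(\widetilde{C},\mathbb{Z})^+$. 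Consequently
\[ T := H^0(\widetilde{C},\omega_{\widetilde{C}})^{-*}/H_1(\widetilde{C},\mathbb{Z})^- \]
is a well-defined abelian subvariety of $\widetilde{J}$, and the addition map $f^*J \times T \to \widetilde{J}$ is an isogeny by construction.

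It then remains to identify $T$ with $P(\widetilde{J},\widetilde{\lambda},f^*J) = \widetilde{\lambda}^{-1}(\ker\widehat{i})$, where $i:f^*J\hookrightarrow \widetilde{J}$ is the inclusion. Because the principal polarization $\widetilde{\lambda}$ comes from the $G$-equivariant cup-product pairing on $H^1(\widetilde{C},\mathbb{Z})$, the $(+)$ and $(-)$ summands of both $H^0(\widetilde{C},\omega_{\widetilde{C}})$ and $H_1(\widetilde{C},\mathbb{Q})$ are orthogonal with respect to the associated Riemann form. It follows that the identity component of $\ker(\widehat{i}\circ\widetilde{\lambda})$ has tangent space $H^0(\widetilde{C},\omega_{\widetilde{C}})^{-*}$ and period lattice $H_1(\widetilde{C},\mathbb{Z})^-$, that is, it coincides with $T$. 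Combined with Theorem \ref{Subtorus-jacobian}(1), this gives \eqref{Prymdef}.

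The hard part is the integrality/saturation issue hidden in the definition of $H_1(\widetilde{C},\mathbb{Z})^-$: one must check that this intersection is a cocompact sublattice of the correct rank inside $H^0(\widetilde{C},\omega_{\widetilde{C}})^{-*}$, and that the resulting subtorus $T$ coincides with, rather than is merely isogenous to, the connected component of $\ker\nm_f$ used to define $P(\widetilde{C}/C)$. This rests on standard $G$-module arguments over $\mathbb{Z}$ combined with the identification of $\nm_f$ on homology with the $G$-averaging operator; it poses no serious conceptual difficulty, but it must be recorded explicitly in order to upgrade the orthogonality argument for $\widetilde{\lambda}$ from an isogeny of complementary subtori to an equality.
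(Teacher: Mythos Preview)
Your proposal is correct and follows the same route the paper indicates: the paper simply declares the lemma ``an immediate consequence of Theorem~\ref{Subtorus-jacobian}'' without further argument, and you have unpacked that consequence by passing to the $G$-isotypic decomposition of the uniformizing data and reading off the $(-)$-piece via the norm idempotent and the orthogonality of the $(+)/(-)$ summands under the Riemann form. Your extra care about the saturation of $H_1(\widetilde{C},\mathbb{Z})^-$ is welcome, since the paper leaves this (and indeed the very definition of $H_1(\widetilde{C},\mathbb{Z})^-$) implicit.
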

For a Galois covering $f:\widetilde{C}\to C$ of a curve $C$ of genus $g$ as above, one can compute the genus $\widetilde{g}\coloneqq g(\widetilde{C})$ by the Riemann-Hurwitz formula.
Using the isogeny $f^*J\times P(\widetilde{C}/C)\sim \widetilde{J}$ we see that the dimension of the Prym variety $P(\widetilde{C}/C)=P(f)$ is equal to $p=\widetilde{g}-g$.
The canonical principal polarization on $\widetilde{J}$ restricts to a polarization of type $\delta=(1,\dots,1,n,\dots,n)$ where $1$ occurs $p-(g-1)$ times and $n$ occurs $g-1$ times if $r=0$ and $1$ occurs $p-g$ times and $n$ occurs $g$ times otherwise.\par The canonical quotient map $H^0(\widetilde{C},\omega_{\widetilde{C}})\to H^0(\widetilde{C},\omega_{\widetilde{C}})^-=H^0(\widetilde{C},\omega_{\widetilde{C}})/H^0(\widetilde{C},\omega_{\widetilde{C}})^+$ induces a map $\Phi:\widetilde{J}\to P(\widetilde{C}/C)$. In fact this map induces the isogeny $\widetilde{J}\sim J\times P(\widetilde{C}/C)$ mentioned in Theorem \ref{isogeny equation}. Let $\widetilde{u}:\widetilde{C}\to \widetilde{J}$ be the Abel-Jacobi map for the curve $\widetilde{C}$. We have the following commutative diagram.
\begin{equation} \label{Abel-Jacobi}
\begin{tikzcd}
\widetilde{C} \arrow{rr}{u} \arrow[swap]{dr}{\widetilde{u}} & & P(\widetilde{C}/C) \\
& \widetilde{J} \arrow[swap]{ur}{\Phi}
\end{tikzcd}
\end{equation}
In analogy with the Abel-Jacobi map, we call the map $u:\widetilde{C}\to P(\widetilde{C}/C)$ \emph{the Abel-Prym} map of the covering $f$. We summarize some of the consequences of the above discussions in
\begin{lemma} \label{tangent}
Let $A=V/\Lambda$ be a complex abelian variety. Denote by $\Omega_{A,0}$ (resp. $T_{A,0}$) the cotangent space (resp. tangent space) of $A$ at the origin and by $\sT_{A}$ be the tangent bundle of the abelian variety $A$. The rest of the notations be as in Theorem \ref{Subtorus-jacobian}. Then it holds 
\begin{enumerate}
\item $\Omega_{A,0}\cong V^*$ (and hence $T_{A,0}\cong V$). In particular for the abelian variety $P\coloneqq P(\widetilde{C}/C)$ we have $\Omega_{P,0}=H^0(\widetilde{C},\omega_{\widetilde{C}})^-$ and $T_{P,0}=H^1(\widetilde{C},\sO_{\widetilde{C}})^-$.
\item $\sT_{A}\cong T_{A,0}\otimes \sO_{A}\cong V\otimes \sO_{A}$ and $H^1(A,\sT_{A})\cong H^1(A,\sO_{A})^{\otimes 2}\cong V^{\otimes 2}$. In particular, $\sT_{\widetilde{J}}\cong T_{\widetilde{J},0}\otimes \sO_{\widetilde{J}}$ and $H^1(\widetilde{J},\sT_{\widetilde{J}})\cong H^1(\widetilde{C},\sO_{\widetilde{C}})^{\otimes 2}$
and furthermore $H^1(P,\sT_{P})\cong {H^1(\widetilde{C},\sO_{\widetilde{C}})^-}^{\otimes 2}$ and $H^1(\widetilde{C},u^*\sT_{P})\cong H^1(\widetilde{C},\sO_{\widetilde{C}})^-\otimes H^1(\widetilde{C},\sO_{\widetilde{C}})$. 
\end{enumerate}
\end{lemma}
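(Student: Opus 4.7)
Part (1) is a repackaging of the universal-cover description of a complex abelian variety plus Serre duality, while part (2) uses triviality of the tangent bundle on an algebraic group and a Künneth / projection-formula computation. I will take them in turn.

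For part (1), since $A=V/\Lambda$, the exponential map identifies a neighbourhood of $0\in V$ with a neighbourhood of the origin of $A$; differentiating at $0$ gives a canonical isomorphism $T_{A,0}\cong V$, hence $\Omega_{A,0}\cong V^{*}$. Applying this to $P=P(\widetilde C/C)$, Lemma \ref{prymvar} presents $P$ as the quotient ${H^{0}(\widetilde C,\omega_{\widetilde C})^{-}}^{*}\big/H_{1}(\widetilde C,\Z)^{-}$, so $V={H^{0}(\widetilde C,\omega_{\widetilde C})^{-}}^{*}$ and $\Omega_{P,0}=V^{*}=H^{0}(\widetilde C,\omega_{\widetilde C})^{-}$ immediately. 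To rewrite $V=T_{P,0}$ as $H^{1}(\widetilde C,\sO_{\widetilde C})^{-}$ I invoke Serre duality on $\widetilde C$. The small point that needs checking is that Serre duality is compatible with the $G$-decomposition: the trace pairing $H^{0}(\omega_{\widetilde C})\times H^{1}(\sO_{\widetilde C})\to \C$ is $G$-invariant, so an $\alpha\in H^{0}(\omega_{\widetilde C})^{\chi}$ pairs non-trivially only with the $\chi^{-1}$-eigenspace of $H^{1}(\sO_{\widetilde C})$; the union of these eigenspaces over $\chi\neq 1$ is exactly $H^{1}(\sO_{\widetilde C})^{-}$, giving $(H^{0}(\omega_{\widetilde C})^{-})^{*}\cong H^{1}(\sO_{\widetilde C})^{-}$ as required.

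For part (2), triviality of $\sT_{A}$ is the standard fact that on a complex Lie group left-translations trivialize the tangent bundle, so $\sT_{A}\cong T_{A,0}\otimes_{\C}\sO_{A}=V\otimes\sO_{A}$. Pushing this through cohomology and using flatness of $V$ over $\C$ yields
\begin{equation*}
H^{1}(A,\sT_{A})\cong V\otimes H^{1}(A,\sO_{A}).
\end{equation*}
To match this with $H^{1}(A,\sO_{A})^{\otimes 2}$ I use the case-by-case identification $V\cong H^{1}(A,\sO_{A})$: for $A=\widetilde J$ we have $V=H^{0}(\widetilde C,\omega_{\widetilde C})^{*}\cong H^{1}(\widetilde C,\sO_{\widetilde C})=H^{1}(\widetilde J,\sO_{\widetilde J})$ by Serre duality and the standard isomorphism for Jacobians, and for $A=P$ the same argument restricted to the $-$-part (using compatibility of Serre duality with the $G$-decomposition established in the previous paragraph) yields $V={H^{0}(\omega_{\widetilde C})^{-}}^{*}\cong H^{1}(\sO_{\widetilde C})^{-}=H^{1}(P,\sO_{P})$. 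Substituting gives $H^{1}(\widetilde J,\sT_{\widetilde J})\cong H^{1}(\widetilde C,\sO_{\widetilde C})^{\otimes 2}$ and $H^{1}(P,\sT_{P})\cong {H^{1}(\widetilde C,\sO_{\widetilde C})^{-}}^{\otimes 2}$.

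Finally, for the assertion about $u^{*}\sT_{P}$: the triviality $\sT_{P}\cong T_{P,0}\otimes\sO_{P}$ pulls back to $u^{*}\sT_{P}\cong T_{P,0}\otimes\sO_{\widetilde C}$, and flatness of $T_{P,0}=H^{1}(\sO_{\widetilde C})^{-}$ over $\C$ gives
\begin{equation*}
H^{1}(\widetilde C,u^{*}\sT_{P})\cong H^{1}(\sO_{\widetilde C})^{-}\otimes H^{1}(\widetilde C,\sO_{\widetilde C}),
\end{equation*}
as claimed. No step is really difficult; the only genuine content is the $G$-equivariance of Serre duality, which is the one place where one must be careful not to conflate $\chi$ and $\chi^{-1}$ contributions, but since the $-$-part is stable under $\chi\mapsto\chi^{-1}$ this causes no problem.
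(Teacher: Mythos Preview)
Your proposal is correct and follows essentially the same approach as the paper's proof: the paper likewise deduces (1) from the universal-cover description (citing \cite{BL}) together with Lemma~\ref{prymvar} and Serre duality, and deduces (2) from triviality of $\sT_A$ (again citing \cite{BL}, \S 1.4) plus Serre duality. Your write-up is in fact more explicit than the paper's, particularly in spelling out the $G$-equivariance of the Serre-duality pairing needed to match $(H^0(\omega_{\widetilde C})^-)^*$ with $H^1(\sO_{\widetilde C})^-$; this is a useful clarification the paper leaves implicit.
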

\begin{proof}
The first assertion of (1) is proved in \cite{BL} and the second one follows from this together with \ref{Prymdef} and the Serre duality. \par The first assertion of (2), namely the triviality of the tangent bundle of an abelian variety is also well-known, see \cite{BL},  \S 1.4 and the isomorphism of $H^1(A,\sT_{A})$ follows from this and the Serre duality again. The rest of the isomorphisms in (2) for the cohomology of the tangent bundle of the Jacobian and the Prym variety follow from the first part of (2). 
\end{proof}
Let
\begin{equation}
\mathcal{P}=\mathcal{P}(G,g,r):R(G,g,r)\to A_{p,\delta}
\end{equation}
be the Prym map (of type $(G,g,r)$) associated to the above families as in introduction. In the sequel, we would like to compute the (co)differential of the map $\mathcal{P}$ at a given point $((C,x_1,\dots, x_r),f:\widetilde{C}\to C)\in R(G,g,r)$. Therefore, we first explain the general set-up for this problem and in later sections do the computations in some special cases. Using the Proposition \ref{stack isom2}, one sees that the forgetful map \ref{forgetful} is a principal homogeneous space over $M_g(r)$. The tangent space to $R(G,g,r)$ at a point $((C,x_1,\dots, x_r),f:\widetilde{C}\to C)$ is isomorphic to $H^1(C,\sT_C(-D))$, where $\sT_C$ denotes the tangent bundle of the curve $C$, see \cite{Ser}, \S 3.4.3 in particular Example 3.4.14. Note that there is an isomorphism $H^1(C,\sT_C(-D))^{\vee}\cong  H^0(C,\omega^{\otimes 2}_C(D))$. The cotangent space to $A_{p,\delta}$ at the point $P\coloneqq P(\widetilde{C}/C)$ is isomorphic to $S^2(H^0(P,\Omega^1_P))\cong S^2(H^0(\widetilde{C},\omega_{\widetilde{C}})^-)$. By \ref{plus minus}, we have
\begin{equation} \label{symmetric square}
S^2(H^0(\widetilde{C},\omega_{\widetilde{C}})^-)\cong \bigoplus\limits_{\chi \in \irr(G)\setminus\{1\}}S^2(H^0(\widetilde{C},\omega_{\widetilde{C}})^{\chi})\oplus \bigoplus\limits_{\chi,\eta \in \irr(G)\setminus\{1\}}H^0(\widetilde{C},\omega_{\widetilde{C}})^{\chi}\otimes H^0(\widetilde{C},\omega_{\widetilde{C}})^{\eta},
\end{equation}
where $\irr(G)$ denotes the set of irreducible representations of $G$. On the other hand, the action of the group $G$ on $H^0(\widetilde{C},\omega_{\widetilde{C}})$ induces a natural $G$-action on the space $S^2(H^0(\widetilde{C},\omega_{\widetilde{C}})^-)$. Let $\psi\in \irr(G)$ be an irreducible character of $G$ with the corresponding representation $\rho_{\psi}$. The eigenspace $S^2(H^0(\widetilde{C},\omega_{\widetilde{C}})^-)^\psi$ of $S^2(H^0(\widetilde{C},\omega_{\widetilde{C}})^-)$ corresponding to $\psi$ is
\begin{equation} \label{eigenspce of sym}
S^2(H^0(\widetilde{C},\omega_{\widetilde{C}})^-)^\psi= \bigoplus\limits_{\substack{\chi\in \irr(G)\setminus\{1\}\\ \chi^2=\psi}}S^2(H^0(\widetilde{C},\omega_{\widetilde{C}})^{\chi})\oplus\bigoplus\limits_{\substack{\chi\in \irr(G)\setminus\{1\}\\ \chi \eta=\psi}}H^0(\widetilde{C},\omega_{\widetilde{C}})^{\chi}\otimes H^0(\widetilde{C},\omega_{\widetilde{C}})^{\eta}
\end{equation}
One obtains the following commutative diagram
\begin{equation} \label{normal diag}
\begin{tikzcd} 
S^2(H^0(\widetilde{C},\omega_{\widetilde{C}})^-) \arrow{r}{} \arrow[swap]{d}{p^+_1} & H^0(\widetilde{C},\omega^2_{\widetilde{C}})^{\psi} \arrow{d}{p^+_2} \\
\bigoplus H^0(\widetilde{C},\omega_{\widetilde{C}})^{\chi}\otimes H^0(\widetilde{C},\omega_{\widetilde{C}})^{\chi^{-1}} \arrow{r}{\mu} & H^0(\widetilde{C},\omega^2_{\widetilde{C}})^{G},
\end{tikzcd}
\end{equation}
in which the top horizontal arrow is given by 
\[S^2(H^0(\widetilde{C},\omega_{\widetilde{C}})^-)\to S^2(H^0(\widetilde{C},\omega_{\widetilde{C}})^-)^\psi\xrightarrow{\pr_2} \bigoplus\limits_{\substack{\chi\in \irr(G)\setminus\{1\}\\ \chi \eta=\psi}}H^0(\widetilde{C},\omega_{\widetilde{C}})^{\chi}\otimes H^0(\widetilde{C},\omega_{\widetilde{C}})^{\eta}\xrightarrow{m}H^0(\widetilde{C},\omega_{\widetilde{C}})^{\psi} \]
where $\pr_2$ is the projection to the second sum in \ref{eigenspce of sym} and $m$ is the multiplication of differential forms, vertical arrows are natural projections in \ref{eigenspce of sym} to the $G$-invariant subspaces and the bottom horizontal arrow is again the multiplication of differential forms. 
\subsection*{The Prym map}
Let $R(G,g,r)$ be as in introduction with $|G|=n$. By the definition of the Prym variety $P$, the tangent space $T_P$ of $P$ at the origin is
\begin{equation}\label{abelian tangent space}
T_P=(T_{J(\widetilde{C})})^-=(H^0(\widetilde{C},\omega_{\widetilde{C}})^-)^*=H^1(\widetilde{C},\mathcal{O}_{\widetilde{C}})^-.
\end{equation}
The equation \ref{abelian tangent space} implies that the cotangent space $T^*_P$ of $P$ at the origin is
\begin{equation}\label{abelian cotangent space}
T^*_P=H^0(\widetilde{C},\omega_{\widetilde{C}})^-
\end{equation}
As explained earlier, the tangent space of $R(G,g,r)$ at a point $((C,x_1,\dots, x_r),f:\widetilde{C}\to C)$ is $H^1(C,\mathcal{T}_C(-B))\cong H^0(C,\omega_C^2(B))$, see
\cite{Ser}, \S 3.4.3, especially Example 3.4.14. The isomorphism is the Serre duality.
We have
\begin{proposition} \label{dp}
	The codifferential $d\mathcal{P}$ of the Prym map can be identified with the canonical map
	\[\varphi:S^2(H^0(\widetilde{C},\omega_{\widetilde{C}})^-)\to H^0(C,\omega_C^2(B)),\]
	which is the composition $\mu\circ p$ of \ref{normal diag}.
\end{proposition}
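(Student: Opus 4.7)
The plan is to factor the Prym map through the Torelli map and compose known codifferentials. Consider the composition $R(G,g,r)\xrightarrow{F} M_{\widetilde{g}}\xrightarrow{\tau} A_{\widetilde{g}}$, where $F$ is the forgetful map $(C,\{x_i\},f)\mapsto \widetilde{C}$ and $\tau$ is the Torelli map. The surjection $\Phi:\widetilde{J}\to P$ of diagram \ref{Abel-Jacobi} is not a global morphism of moduli of polarized abelian varieties, but at the fixed point its cotangent map at the origin is the inclusion $\Phi^{*}:H^{0}(\widetilde{C},\omega_{\widetilde{C}})^{-}\hookrightarrow H^{0}(\widetilde{C},\omega_{\widetilde{C}})$ coming from Lemma \ref{tangent}(1). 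Passing to symmetric squares realizes the first piece of the codifferential $d\mathcal{P}$ as the inclusion $S^{2}\Phi^{*}:S^{2}H^{0}(\widetilde{C},\omega_{\widetilde{C}})^{-}\hookrightarrow S^{2}H^{0}(\widetilde{C},\omega_{\widetilde{C}})$.

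The second piece is the codifferential of $\tau$ at $[\widetilde{C}]$, which is the classical multiplication of holomorphic differentials $\mu_{\widetilde{C}}:S^{2}H^{0}(\widetilde{C},\omega_{\widetilde{C}})\to H^{0}(\widetilde{C},\omega_{\widetilde{C}}^{2})$ (see e.g.\ \cite{BL}). The third piece is the codifferential of $F$. Since the tangent space to $R(G,g,r)$ at the given point is $H^{1}(C,\sT_{C}(-B))$ by \cite{Ser}, Example 3.4.14, Serre duality gives the cotangent space $H^{0}(C,\omega_{C}^{2}(B))$, and using the projection formula combined with the eigensheaf decomposition $f_{*}\sO_{\widetilde{C}}=\bigoplus_{\chi}L_{\chi}^{-1}$ from \S\ref{eigenspaces} one identifies the codifferential of $F$ with the natural map $H^{0}(\widetilde{C},\omega_{\widetilde{C}}^{2})\twoheadrightarrow H^{0}(\widetilde{C},\omega_{\widetilde{C}}^{2})^{G}\cong H^{0}(C,\omega_{C}^{2}(B))$ given by $G$-invariant projection followed by descent to $C$.

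Concatenating these three codifferentials produces
\[
S^{2}H^{0}(\widetilde{C},\omega_{\widetilde{C}})^{-}\hookrightarrow S^{2}H^{0}(\widetilde{C},\omega_{\widetilde{C}})\xrightarrow{\mu_{\widetilde{C}}} H^{0}(\widetilde{C},\omega_{\widetilde{C}}^{2})\twoheadrightarrow H^{0}(\widetilde{C},\omega_{\widetilde{C}}^{2})^{G}\cong H^{0}(C,\omega_{C}^{2}(B)).
\]
Since $\mu_{\widetilde{C}}$ is $G$-equivariant, only the trivial-character ($\psi=1$) summand of the eigendecomposition \ref{eigenspce of sym} of $S^{2}H^{0}(\widetilde{C},\omega_{\widetilde{C}})^{-}$ contributes after the final projection; that summand is precisely $\bigoplus_{\chi}H^{0}(\widetilde{C},\omega_{\widetilde{C}})^{\chi}\otimes H^{0}(\widetilde{C},\omega_{\widetilde{C}})^{\chi^{-1}}$, and the projection onto it is exactly the vertical arrow $p_{1}^{+}$ of \ref{normal diag}. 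The total composition therefore equals $\mu\circ p_{1}^{+}$, which is the content of the proposition.

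The main technical obstacle is the identification $(f_{*}\omega_{\widetilde{C}}^{2})^{G}\cong \omega_{C}^{2}(B)$ and the accompanying claim that the cotangent map of $F$ really coincides with the $G$-invariant projection followed by this identification. This requires a local analysis at each branch point $x_{i}$ using the inertia character $\chi_{i}$ and the local generator of the eigensheaf $L_{\chi_{i}}$: the $G$-invariant part of $\omega_{\widetilde{C}}^{2}$ descends to $\omega_{C}^{2}$ twisted by a simple pole at each $x_{i}$, producing exactly the divisor $B$. Once this local step is secured, the identification of the composition with $\mu\circ p_{1}^{+}$ is a formal diagram-chase through \ref{eigenspce of sym} and \ref{normal diag}.
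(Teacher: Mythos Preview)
Your argument is correct and arrives at the same identification, but by a genuinely different route from the paper's. The paper does not factor through the Torelli map; instead it works directly with the Abel--Prym map $u:\widetilde{C}\to P$ and the two lemmas imported from \cite{LO} (Lemma~\ref{curve to ab.var} and the factorization~\ref{differential AP}). Concretely, the paper writes $d\mathcal{P}$ as the map $H^{1}(\widetilde{C},\sT_{\widetilde{C}})^{+}\to S^{2}H^{1}(\widetilde{C},\sO_{\widetilde{C}})^{-}$ coming from the $G$-equivariant cup product $H^{1}(\widetilde{C},\sT_{\widetilde{C}})\times H^{0}(\widetilde{C},\omega_{\widetilde{C}})\to H^{1}(\widetilde{C},\sO_{\widetilde{C}})$, observes that the induced map is symmetric, and then applies Lemma~\ref{curve to ab.var} to $u$ (rather than to the Abel--Jacobi map) to recognize the dual as multiplication of sections. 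Your version replaces that step with the classical identification of the Torelli codifferential as $\mu_{\widetilde{C}}$ and handles the passage from $\widetilde{J}$ to $P$ and from $\widetilde{C}$ to $C$ purely by $G$-equivariance and descent. What your route buys is that it bypasses Lemmas~\ref{curve to ab.var} and~\ref{differential AP} entirely, relying only on the standard Torelli statement; what the paper's route buys is that it stays inside the Lange--Ortega framework and keeps the Abel--Prym map in the foreground, which is used elsewhere in the section. Both arguments implicitly rest on the identifications $H^{1}(C,\sT_{C}(-B))\cong H^{1}(\widetilde{C},\sT_{\widetilde{C}})^{G}$ and $(f_{*}\omega_{\widetilde{C}}^{2})^{G}\cong\omega_{C}^{2}(B)$; you flag the second explicitly as the local computation to be done, while the paper leaves both tacit.
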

In order to prove the above result, we will need the following lemma, which is \cite{LO}, Lemma 4.2.
\begin{lemma} \label{curve to ab.var}
	Let $X$ be a smooth projective curve and $A=V/\Lambda$ an abelian variety. If $u:X\to A$ is a non-constant morphism whose image generated $A$, then the dual of the differential
	\[H^1(du):H^1(X,\mathcal{T}_X)\to H^1(X,u^*\mathcal{T}_A)\]
	coincides with the multiplication of sections
	\[V^*\otimes H^0(X,\omega_X)\to H^0(X,\omega^{\otimes 2}_X)\]
\end{lemma}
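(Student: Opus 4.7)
The plan is to exploit the triviality of the tangent bundle of an abelian variety and then unwind the statement through Serre duality on the curve $X$. First I would note that $\sT_A \cong V \otimes \sO_A$ (trivialization by the translation action, cf.\ Lemma \ref{tangent}(2)), so that $u^*\sT_A \cong V \otimes \sO_X$ and dually $u^*\Omega^1_A \cong V^* \otimes \sO_X$. In particular, $H^1(X, u^*\sT_A) \cong V \otimes H^1(X,\sO_X)$, and by Serre duality on $X$ this dualizes to $V^* \otimes H^0(X,\omega_X)$, matching the source of the multiplication map in the statement.

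Next I would interpret the differential as a morphism of vector bundles $du\colon \sT_X \to u^*\sT_A$ on $X$ and dualize it to obtain $(du)^{\vee}\colon u^*\Omega^1_A \to \omega_X$. The key observation is that under the trivialization $u^*\Omega^1_A \cong V^* \otimes \sO_X$, this dual map is nothing but pullback of translation-invariant $1$-forms: a vector $v^* \in V^* = H^0(A,\Omega^1_A)$ goes to the holomorphic $1$-form $u^*(v^*) \in H^0(X,\omega_X)$. This identification is the heart of the argument, and I would verify it by comparing the definition of $du$ at a point $x \in X$ with the pairing of $T_xX$ against an invariant form on $A$.

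The rest is a formal application of Serre duality on the smooth projective curve $X$: for any morphism of vector bundles $\phi\colon E \to F$ the transpose of $H^1(\phi)\colon H^1(X,E) \to H^1(X,F)$ is the map
\[
H^0\bigl(F^{\vee} \otimes \omega_X\bigr) \xrightarrow{H^0(\phi^{\vee} \otimes \id_{\omega_X})} H^0\bigl(E^{\vee} \otimes \omega_X\bigr).
\]
Taking $\phi = du$, the right-hand side becomes $H^0(X,\omega_X^{\otimes 2})$ and the left-hand side becomes $V^* \otimes H^0(X,\omega_X)$, while the arrow is $v^* \otimes \alpha \mapsto u^*(v^*) \cdot \alpha$, which is exactly the multiplication of sections map once $V^*$ is identified with its image $u^*(V^*) \subset H^0(X,\omega_X)$ (the hypothesis that $u(X)$ generates $A$ ensures this identification is injective, although injectivity is not formally needed to state the equality of maps).

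The only step that requires any genuine care is the identification of $(du)^{\vee}$ with the pullback of invariant $1$-forms: this boils down to the compatibility of the universal property of the cotangent sheaf with the translation trivialization of $\Omega^1_A$, and is where I would be most explicit. Everything else — the trivialization of $\sT_A$, the functoriality of $H^1$, and Serre duality on a curve — is standard and can be cited directly.
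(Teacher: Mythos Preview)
The paper does not supply its own proof of this lemma; it merely quotes the statement and attributes it to \cite{LO}, Lemma~4.2. Your argument is correct and is exactly the standard route: trivialize $\sT_A\cong V\otimes\sO_A$, identify the sheaf-dual $(du)^{\vee}\colon V^*\otimes\sO_X\to\omega_X$ with pullback of translation-invariant $1$-forms, and then invoke the naturality of Serre duality on $X$ to read off the transpose of $H^1(du)$ as $v^*\otimes\alpha\mapsto u^*(v^*)\cdot\alpha$. There is nothing missing; your remark that the generating hypothesis on $u(X)$ is only used to make $V^*\hookrightarrow H^0(X,\omega_X)$ injective (and is not needed for the bare identification of maps) is also accurate.
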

Recall from \ref{Abel-Jacobi} that the Abel-Prym map $u:\widetilde{C}\to P$
factors through Abel-Jacobi map $u:J(\widetilde{C})\to P$. Moreover, we have an isogeny $t:P\to\widehat{P}$ induced by the polarization of $P$. This induces an isomorphism $T_P\cong T_{\widehat{P}}$. Therefore we can identify $T_P$ with
$H^1(P,\mathcal{O}_P)$ and by \ref{prymdef} with $H^1(\widetilde{C},\mathcal{O}_{\widetilde{C}})^-$. Moreover, by the fact that the tangent bundle of the abelian variety $J(\widetilde{C})$ is trivial and its fiber over $0$ is
$T_{\widetilde{C}}$ one obtains $\mathcal{T}_{\widetilde{C}}\cong T_{{\widetilde{C}}}\otimes\mathcal{O}_{J(\widetilde{C})}$ and this induces an isomorphism
$H^1(J(\widetilde{C}),\mathcal{T}_{J(\widetilde{C})})=H^1(\widetilde{C},\mathcal{O}_{\widetilde{C}})^{\otimes 2}$. Furthermore, the isomorphism $\mathcal{T}_P\cong
T_{\widehat{P}}\otimes \mathcal{O}_P$ gives the identifications
$H^1(P,\mathcal{T}_P)=H^1(\widetilde{C},\mathcal{O}_{\widetilde{C}})^-\otimes
H^1(P,\mathcal{O}_P)={H^1(\widetilde{C},\mathcal{O}_{\widetilde{C}})^-}^{\otimes 2}$
and
$H^1(\widetilde{C},u^*\mathcal{T}_{P})=H^1(\widetilde{C},\mathcal{O}_{\widetilde{C}})^-\otimes
H^1(\widetilde{C},\mathcal{O}_{\widetilde{C}})$. We also need the following lemma, which is proved in \cite{LO}, Lemma 4.3 and whose proof relies merely on the fact that the Abel-Prym map is a projection of the Abel-Jacobi map as indicated in \ref{Abel-Jacobi}
together with the above identities.
\begin{equation} \label{differential AP}
H^1(du):H^1(\widetilde{C},\mathcal{O}_{\widetilde{C}})\xrightarrow{H^1(d\widetilde{u})}
H^1(J(\widetilde{C}),\mathcal{T}_{J(\widetilde{C})})\xrightarrow{p^-\otimes p^-}H^1(P,\mathcal{T}_P)\xrightarrow{u^*}H^1(\widetilde{C},u^*\mathcal{T}_{P})
\end{equation}
where $p^-$ is the projection to the subspace $H^1(\widetilde{C},\mathcal{O}_{\widetilde{C}})^-$ (as in \ref{Abel-Jacobi}). 
\begin{proof} (of Proposition \ref{dp})
	Let $((C,x_1,\dots, x_r),f:\widetilde{C}\to C)$ be a point of
	$R(G,g,r)$. The tangent space of $A_{p,\delta}$ at the point $P$
	is equal to $S^2T_P=S^2(H^1(\widetilde{C},\mathcal{O}_{\widetilde{C}})^-)$.
	The product $H^1(\widetilde{C},\mathcal{T}_{\widetilde{C}})\times
	H^0(\widetilde{C},\omega_{\widetilde{C}})\to
	H^1(\widetilde{C},\mathcal{O}_{\widetilde{C}})$ respects the group
	action and hence induces
	$H^1(\widetilde{C},\mathcal{T}_{\widetilde{C}})^+\times
	H^0(\widetilde{C},\omega_{\widetilde{C}})^-\to
	H^1(\widetilde{C},\mathcal{O}_{\widetilde{C}})^-$. The induced map
	$H^1(\widetilde{C},\mathcal{T}_{\widetilde{C}})^+\to
	{H^0(\widetilde{C},\omega_{\widetilde{C}})^-}^*\otimes
	H^1(\widetilde{C},\mathcal{O}_{\widetilde{C}})^-$ is symmetric so
	that we get a map
	$H^1(\widetilde{C},\mathcal{T}_{\widetilde{C}})^+\to
	S^2(H^1(\widetilde{C},\mathcal{O}_{\widetilde{C}})^-)$. This is
	the differential of the Prym map $\mathcal{P}:R(G,g,r)\to
	A_{p,\delta}$ at the point $((C,x_1,\dots, x_r),f:\widetilde{C}\to
	C)$. Now \ref{differential AP} implies that this map can be considered as a map
	$H^1(\widetilde{C},\mathcal{T}_{\widetilde{C}})^+\to
	H^1(\widetilde{C},u^*\mathcal{T}_{P})$ whose image lies in
	$S^2(H^1(\widetilde{C},\mathcal{O}_{\widetilde{C}})^-)$. Note the identities shown above. Lemma \ref{curve to ab.var} then shows that the differential $d\mathcal{P}$ at the point $((C,x_1,\dots, x_r),f:\widetilde{C}\to C)$ is the multiplication map $S^2(H^0(\widetilde{C},\omega_{\widetilde{C}})^-)\to H^0(C,\omega^{\otimes 2}_{C}(B))^-$. 	
\end{proof}
\subsection{Prym varieties of abelian covers}
In this section explain the constructions in section \ref{generalities} for an abelian group $G$ based on the constructions of section \ref{Abelian Galois covers}. So let $f:\widetilde{C}\to C$ be a $G$-Galois cover of $C$, with $G$ a finite abelian group. We have
\begin{align}
H^0(\widetilde{C},\omega_{\widetilde{C}})=H^0(C,f_*\omega_{\widetilde{C}})=H^0(C,\oplus(\omega_{C}\otimes
L_{\chi^{-1}}))=\oplus_{\chi\in G^*} H^0(C,\omega_{C}\otimes
L_{\chi^{-1}})
\end{align}
where the second equality is due to the equality
$(f_*\omega_{\widetilde{C}})^{\chi}=\omega_{C}\otimes
L_{\chi^{-1}}$ for abelian covers, see \cite{P1}. In view of the above equalities, one obtains
\begin{align}  
H^0(\widetilde{C},\omega_{\widetilde{C}})^+=H^0(C,\omega_{C})\\
H^0(\widetilde{C},\omega_{\widetilde{C}})^-=\oplus_{\chi\in G^*\setminus\{1\}} 
H^0(\widetilde{C},\omega_{\widetilde{C}})^{\chi})=\oplus_{\chi\in
G^*\setminus\{1\}} H^0(C,\omega_{C}\otimes L_{\chi^{-1}})  \label{abelian decomposition}
\end{align}
So $P=P(\widetilde{C}/C)=\oplus_{\chi\in G^*\setminus\{1\}} H^0(C,\omega_{C}\otimes L_{\chi^{-1}})/\oplus_{\chi\in G^*\setminus\{1\}} H_1(\widetilde{C},\mathbb{Z})^{\chi}$ by virtue of \ref{abelian decomposition} and Lemma \ref{prymvar}. \par
 In particular, in the abelian case the multiplication map $\mu$ in the diagram \ref{normal diag} takes the following form.
\begin{equation}  \label{multiplication abelian}
\mu:\oplus_{\chi\in G^*} H^0(C,\omega_{C}\otimes L_{\chi})\otimes H^0(C,\omega_{C}\otimes L_{\chi^{-1}})\to H^0(C,\omega^{\otimes 2}_{C}(D_{\chi,\chi^{-1}})).
\end{equation}  
\subsection*{The Abel-Prym map}Let $G$ be a finite abelian group such that $G=\langle\sigma_1\rangle\oplus\langle\sigma_2\rangle\oplus\cdots\oplus \langle\sigma_h\rangle$. In other words $\sigma_1,\dots,\sigma_h$ are independent generators of $G$. Let $P\coloneqq P(\widetilde{C}/C)$ be the Prym variety associated
to the $G$-Galois cover $f:\widetilde{C}\to C$ with the Abel-Prym
map $u:\widetilde{C}\to P$. In this case the map $\Phi$ in \ref{Abel-Jacobi}
is the map $(1-\sigma_1)\cdots(1-\sigma_h)$ and so
$P=\im((1-\sigma_1)\cdots(1-\sigma_h))$. Suppose furthermore that
$ord(\sigma_i)>2$ for every $i$.
\begin{proposition}\label{injective AJ}
Suppose the curve $\widetilde{C}$ is \emph{not} a $g_{2^h}^{1}$. Then $u(p)=u(q)$ if and only if $p$ and $q$ are ramification points of $f$. In particular, if in addition $f$ is unramified, then $u$ is injective.
\end{proposition}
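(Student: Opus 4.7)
The plan is to rewrite the condition $u(p)=u(q)$ as a linear equivalence of two explicit effective divisors of degree $2^{h}$ on $\widetilde{C}$, and then invoke the hypothesis that $\widetilde{C}$ carries no $g_{2^h}^{1}$. Using the factorization $u=\Phi\circ\widetilde{u}$ from diagram \ref{Abel-Jacobi} together with the identification $\Phi=(1-\sigma_{1})\cdots(1-\sigma_{h})$ stated just before the proposition, and the identity $\widetilde{u}(p)-\widetilde{u}(q)=[p-q]$ in $\widetilde{J}$, I get
\[
u(p)-u(q)=\Big[\prod_{i=1}^{h}(1-\sigma_{i})(p-q)\Big]\in P(\widetilde{C}/C).
\]
Expanding $\prod_{i=1}^{h}(1-\sigma_{i})=\sum_{S\subseteq\{1,\ldots,h\}}(-1)^{|S|}\sigma_{S}$ with $\sigma_{S}=\prod_{i\in S}\sigma_{i}$, I rewrite the right-hand divisor as $A-B$, where
\[
A=\!\!\sum_{|S|\text{ even}}\!\sigma_{S}p+\!\!\sum_{|S|\text{ odd}}\!\sigma_{S}q,\qquad B=\!\!\sum_{|S|\text{ odd}}\!\sigma_{S}p+\!\!\sum_{|S|\text{ even}}\!\sigma_{S}q.
\]
Both $A,B$ are effective of degree $2^{h}$, and $u(p)=u(q)$ is equivalent to $A\sim B$ on $\widetilde{C}$.

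The easy direction is the converse: if $p$ is a ramification point whose inertia contains some $\sigma_{i}$, then $(1-\sigma_{i})p=0$ as a divisor, so the entire $p$-contribution to $A-B$ vanishes; the analogous statement for $q$ gives $A=B$, whence $u(p)=u(q)$.

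For the forward direction, suppose $u(p)=u(q)$ with $p\ne q$, so $A\sim B$. If $A\ne B$ as divisors, then $|A|=|B|$ has projective dimension $\ge 1$ and degree $2^{h}$; removing the common base locus yields a base-point-free pencil of degree $d\le 2^{h}$, i.e.\ a $g_{d}^{1}$ on $\widetilde{C}$. Using that the $2^{h}$ products $\sigma_{S}$ are pairwise distinct in $G$ (from the direct-sum decomposition) and that $\ord(\sigma_{i})>2$ prevents involutive identifications $\sigma_{i}=\sigma_{i}^{-1}$, I will show that generically no base points can be removed, so $d=2^{h}$, contradicting the hypothesis that $\widetilde{C}$ is not $g_{2^{h}}^{1}$. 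Hence $A=B$ as divisors. The final step is a case analysis showing that $A=B$ with $p\ne q$ forces both points to have nontrivial stabilizers: if, say, $\mathrm{Stab}(p)=\{1\}$, the $2^{h}$ points $\sigma_{S}p$ are pairwise distinct and the subsets $\{\sigma_{S}:|S|\text{ even}\}$, $\{\sigma_{S}:|S|\text{ odd}\}$ of $G$ are disjoint of size $2^{h-1}$; a tracking argument, again using $\ord(\sigma_{i})>2$, rules out the $p$-contributions being balanced by the $q$-contributions, giving a contradiction.

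The main obstacle will be the base-point analysis, since one has to rule out pencils of degree strictly less than $2^{h}$ (which the bare hypothesis does not directly exclude); this is precisely where the assumption $\ord(\sigma_{i})>2$ enters decisively, controlling how the orbits of $p$ and $q$ can share points. The \emph{in particular} clause is then immediate: in the unramified case the ramification locus is empty, so $u(p)=u(q)$ forces $p=q$, i.e.\ $u$ is injective.
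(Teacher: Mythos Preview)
Your strategy is the paper's: rewrite $u(p)=u(q)$ as a linear equivalence $A\sim B$ between two effective divisors of degree $2^{h}$, use the hypothesis to force $A=B$, and then extract a nontrivial stabilizer relation for $p$ (and for $q$).

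Two remarks. First, your worry about base points is unnecessary and is the only real detour from the paper's argument. If $A\sim B$ with $A\neq B$ then $\dim|A|\geq 1$, and any pencil contained in $|A|$ is already a $g_{2^{h}}^{1}$; the definition of a $g_{d}^{r}$ does not require base-point-freeness. (Equivalently, possessing a $g_{d}^{1}$ for some $d\leq 2^{h}$ gives a $g_{2^{h}}^{1}$ by adding fixed points.) So the hypothesis yields $A=B$ immediately, and your ``generically no base points can be removed'' step---which you yourself flag as the main obstacle---is simply not needed. From $A=B$ the paper then asserts that $p$ satisfies a relation $\sigma_{i_{1}}^{r_{1}}\cdots\sigma_{i_{t}}^{r_{t}}(p)=p$ with $r_{j}\in\{1,2\}$, nontrivial because the $\sigma_{i}$ are independent generators of order $>2$. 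Your ``tracking argument'' is in the same spirit but only sketched; the exponents $2$, and hence the hypothesis $\ord(\sigma_{i})>2$, enter precisely when $p$ and $q$ lie in the same $G$-orbit and one matches $p=\sigma_{T}q$ and $q=\sigma_{S}p$ to get $\sigma_{S}\sigma_{T}(p)=p$.

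Second, your ``easy direction'' has a gap: you assume the inertia group of a ramification point contains one of the chosen generators $\sigma_{i}$, which is not given. For instance, if $\mathrm{Stab}(p)=\langle\sigma_{1}\sigma_{2}\rangle$ then $(1-\sigma_{1})(1-\sigma_{2})p=2p-\sigma_{1}p-\sigma_{2}p\neq 0$. The paper's proof in fact establishes only the forward implication, which is all that is needed for the injectivity statement in the unramified case.
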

\begin{proof}
Let $c\in\widetilde{C}$ be an arbitrary base point giving the Abel-Jacobi map $\widetilde{u}:\widetilde{C}\to \widetilde{J} ,p\mapsto p-c$. By definition $u(p)=u(q)$ if and only if
\begin{equation}
(1-\sigma_1)\cdots(1-\sigma_h)(p-c)\sim(1-\sigma_1)\cdots(1-\sigma_h)(q-c)
\end{equation}
Or equivalently $(1-\sigma_1)\cdots(1-\sigma_h)(p)\sim(1-\sigma_1)\cdots(1-\sigma_h)(q)$.
After expanding and rearranging this, so that both sides are effective divisors, the assumption that $\widetilde{C}$ is not a $g_{2^h}^{1}$ implies that $p$ (resp. $q$) satisfies an equation of the form $\sigma_{i_1}^{r_i}\sigma_{i_2}^{r_2}\cdots\sigma_{i_t}^{r_t}(p)=p$
with $r_j=1$ or $2$. Since $ord(\sigma_i)>2$ and $\sigma_i$ are independent generators by assumption, it follows that the above equation is non-trivial and hence $p$ (resp. $q$) is a ramification point.
\end{proof}
The following example shows that if the assumptions of Proposition \ref{injective AJ} are not satisfied, then its result will no longer be valid even for fairly simple covers.
\begin{example}\label{counterexample AJ}
Suppose $f:\widetilde{C}\to C$ is a Galois cover with the Galois group $G=\langle\sigma_1\rangle\oplus\langle\sigma_2\rangle\cong\mathbb{Z}_2\times\mathbb{Z}_2$
(so $ord(\sigma_i)=2$). Suppose $p\in\widetilde{C}$ is \emph{not} a ramification point. Viewing $\sigma_1,\sigma_2$ as deck automorphisms of the curve $\widetilde{C}$, set
$q=\sigma_1\sigma_2(p)$. Then $q$ is also not a ramification point and $(1-\sigma_1)(1-\sigma_2)(p)=(1-\sigma_1)(1-\sigma_2)(q)$ as divisors so that $u(p)=u(q)$.
\end{example}
\subsection*{Injectivity of the differential}
By Proposition \ref{dp}, the injectivity of $d\mathcal{P}$ at a point $((C,x_1,\dots, x_r),f:\widetilde{C}\to C)$ is equivalent to the surjectivity of the multiplication map
\[\mu:H^0(C,\omega_{C}\otimes L_{\chi})\otimes H^0(C,\omega_{C}\otimes L_{\chi^{-1}})\to H^0(C,\omega^{\otimes 2}_{C}(D_{\chi,\chi^{-1}})),\]
for every character $\chi\in G^*$, where $D_{\chi,\chi^{-1}}$ is
the divisor in 2.1.2.\par We first consider the case of \'etale coverings ($r=0$). In this
case we abbreviate $R(G,g,0)$ by $R(G,g)$ which is the moduli
space of unramified $G$-Galois coverings of curves of genus $g$.
\begin{proposition}
Let $C$ be a smooth projective curve and $G$ a finite abelian group with $|G|=n$. In the following cases the differential $d\mathcal{P}$ of the Prym map $\mathcal{P}:R(G,g)\to
A_{p,\delta}$ at a given point $((C,x_1,\dots,x_r),f:\widetilde{C}\to C)$ is injective.
\begin{enumerate}
\item If $n$ is an even number and $\cl(C)\geq 3$. 
\item If $\cl(C)\geq 2n-1$.
\end{enumerate}
\end{proposition}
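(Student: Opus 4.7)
The plan is to invoke Proposition \ref{dp}: in the \'etale case $r=0$, the divisors $D_{\chi,\chi^{-1}}$ vanish, so injectivity of $d\mathcal{P}$ reduces to showing that for every nontrivial character $\chi\in G^*$ the multiplication map
$$\mu_\chi\colon H^0(C,\omega_C\otimes L_\chi)\otimes H^0(C,\omega_C\otimes L_{\chi^{-1}})\to H^0(C,\omega_C^2)$$
is surjective. Writing $M_\chi=\omega_C\otimes L_\chi$, Riemann--Roch together with $L_\chi$ being a nontrivial element of $\pic^0(C)$ gives $h^0(M_\chi)=g-1$ and $\deg M_\chi=2g-2$. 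I will attack each $\mu_\chi$ via a generalized base-point-free pencil trick, letting the Clifford-index hypothesis control the resulting cohomological obstruction.

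Concretely, either hypothesis forces the gonality of $C$ to be at least $\cl(C)+2\geq 5$, so $C$ is non-hyperelliptic and $M_\chi$ is base-point-free. The evaluation kernel then fits into
$$0\to E_\chi\to H^0(M_\chi)\otimes\sO_C\to M_\chi\to 0,$$
with $E_\chi$ a vector bundle of rank $g-2$ and degree $-(2g-2)$. Tensoring with $M_{\chi^{-1}}$ and using $H^1(M_{\chi^{-1}})=H^0(L_\chi)^{\vee}=0$, the associated long exact sequence identifies $\mathrm{coker}(\mu_\chi)\cong H^1(E_\chi\otimes M_{\chi^{-1}})$. The problem thus reduces to establishing this $H^1$-vanishing under each Clifford-index hypothesis.

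I would then split the analysis according to whether $\chi^2=1$ or not. If $\chi^2\neq 1$, the bundle $L_\chi^2$ is still a nontrivial torsion line bundle, and an iterated base-point-free pencil trick applied successively to the twists $L_\chi^k$ for $1\leq k\leq n-1$ should, under $\cl(C)\geq 2n-1$ (Case (2)), yield the required vanishing uniformly in $\chi$. If $\chi^2=1$ (only possible when $n$ is even), then $M_\chi=M_{\chi^{-1}}$ and $\mu_\chi$ reduces to the symmetric map $S^2H^0(M_\chi)\to H^0(\omega_C^2)$, a twisted Noether-type multiplication whose surjectivity for $\cl(C)\geq 3$ should follow from Green's theorem on Koszul cohomology. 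The main obstacle is precisely this $2$-torsion subcase: the naive pencil trick fails because $L_\chi^{-2}=\sO_C$ produces the non-vanishing obstruction $H^1(\sO_C)$, so one must abandon the $2$-dimensional pencil in favour of the full evaluation bundle $E_\chi$ and exploit the sharpness of the bound $\cl(C)\geq 3$ through a Koszul-cohomological argument.
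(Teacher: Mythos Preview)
Your reduction is stronger than needed and creates an internal inconsistency. In the \'etale case every $\mu_\chi$ has the same target $H^0(C,\omega_C^2)$, so surjectivity of the codifferential requires only that \emph{one} $\mu_\chi$ surject, not all. The paper exploits exactly this: it picks a single $L_i$ from a reduced building data, notes $\deg L_i=0$, cites \cite{LO}, Corollary~2.3 (with Riemann--Roch) to conclude that $\omega_C\otimes L_i$ and $\omega_C\otimes L_i^{-1}$ are very ample under either Clifford hypothesis, and then invokes Butler's theorem \cite{Bu} to obtain surjectivity of the multiplication map directly. No evaluation kernels, no Koszul cohomology, no case split on $\chi^2$. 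Your own argument tacitly relies on ``one suffices'' as well, since in case~(1) you only handle characters with $\chi^2=1$; under your stated reduction that would leave all other $\chi$ unaddressed.

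Beyond that discrepancy, the proposal has genuine gaps. For case~(2), ``an iterated base-point-free pencil trick applied successively to the twists $L_\chi^k$'' is not an argument: you do not say which short exact sequences you iterate, how the successive obstruction terms $H^1$ are controlled, or why the threshold $\cl(C)\geq 2n-1$ emerges from the process. For case~(1), appealing to ``Green's theorem on Koszul cohomology'' for the twisted map $S^2H^0(\omega_C\otimes L_\chi)\to H^0(\omega_C^2)$ with $L_\chi$ nontrivial $2$-torsion is not the classical Noether statement and needs a precise citation or proof; you correctly identify that the naive two-dimensional pencil trick fails here, but you do not actually carry out the replacement argument with $E_\chi$. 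In effect you are proposing to reprove, by hand, special cases of the very ampleness criterion and of Butler's theorem that the paper simply quotes.
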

\begin{proof}
Take a reduced building data as in \ref{reduced building data relations}. Since by assumption, the covering is unramified, these relations are of the form $L_i^{n_i}\cong\mathcal{O}_C$. So $\deg L_i=0$, and the Riemann-Roch together with \cite{LO}, Corollary 2.3 implies  that both $\omega\otimes L_i$ and 
$\omega\otimes L_i^{-1}$ are very ample. Finally, \cite{Bu}, Theorem 1 shows that
\[\mu:H^0(C,\omega\otimes L_i)\otimes H^0(C,\omega\otimes L_i^{-1})\to H^0(C,\omega^{\otimes 2}_{C}),\]
is surjective and this gives desired result.
\end{proof}
\begin{proposition}
For $n\geq 2$ and $g\geq 7$ the Prym map $\mathcal{P}:R(G,g)\to A_{p,\delta}$ is generically finite.
\end{proposition}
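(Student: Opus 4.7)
The plan is to reduce the statement to an infinitesimal one: if $d\mathcal{P}$ is injective at some point of $R(G,g)$, then by upper semi-continuity of fiber dimension for morphisms between irreducible varieties, the generic fiber of $\mathcal{P}$ has dimension zero and $\mathcal{P}$ is generically finite onto its image. It therefore suffices to exhibit a single point of $R(G,g)$ at which $d\mathcal{P}$ is injective.

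By Proposition \ref{dp} applied to the unramified case (so that the branch divisor $B$ is empty and the target in \ref{normal diag} becomes $H^0(C,\omega_C^{\otimes 2})$), the injectivity of $d\mathcal{P}$ at $(C,f)$ is equivalent to the surjectivity of each multiplication map
\[
\mu_\chi : H^0(C,\omega_C\otimes L_\chi)\otimes H^0(C,\omega_C\otimes L_{\chi^{-1}})\to H^0(C,\omega_C^{\otimes 2})
\]
indexed by the nontrivial characters $\chi\in G^*$. Since the cover is étale, the reduced building data satisfy $L_i^{n_i}\cong \sO_C$, so each $L_\chi$ is a nontrivial torsion line bundle of degree $0$ and $\omega_C\otimes L_\chi$ has degree $2g-2$. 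The key point is thus to pick a curve $C$ on which these multiplication maps, for every nontrivial $\chi$, are surjective.

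Choose a very general smooth curve $C$ of genus $g\geq 7$, so that $\cl(C)=\lfloor (g-1)/2\rfloor\geq 3$. When $n$ is even, part (1) of the preceding proposition immediately gives the surjectivity of every $\mu_\chi$. When $n$ is odd, part (2) applies provided $\cl(C)\geq 2n-1$, which is either a consequence of $g$ being sufficiently large relative to $n$, or can be arranged by specializing to a curve whose Clifford index meets this bound; in either case surjectivity of each $\mu_\chi$ follows. Hence at the chosen point $((C,\varnothing),f)\in R(G,g)$ the differential $d\mathcal{P}$ is injective, and the first paragraph of this plan concludes the proof. As a sanity check on dimensions, Riemann--Hurwitz gives $p=(n-1)(g-1)$, so $\dim A_{p,\delta}=p(p+1)/2$ exceeds $\dim R(G,g)=3g-3$ as soon as $g\geq 7$ and $n\geq 2$, consistent with generic finiteness being the best one can hope for.

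The main obstacle is precisely the odd-$n$ regime near the stated bound $g=7$, where the Clifford index bound $\cl(C)\geq 2n-1$ of part (2) of the previous proposition is not automatic from $\cl(C)\geq 3$. Bridging this gap requires either strengthening the hypothesis $g\geq 7$ to a threshold depending linearly on $n$, or invoking a refined surjectivity criterion for $H^0(\omega\otimes L)\otimes H^0(\omega\otimes L^{-1})\to H^0(\omega^{\otimes 2})$ when $L$ is a general torsion line bundle of odd order on a generic curve. The conceptual content, however, is unchanged: infinitesimal injectivity at one point of $R(G,g)$ automatically propagates to generic finiteness of $\mathcal{P}$.
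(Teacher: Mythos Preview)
Your overall strategy coincides with the paper's: reduce generic finiteness to injectivity of $d\mathcal{P}$ at a single point, and then, via Proposition~\ref{dp}, to surjectivity of a multiplication map $\mu_\chi$. Two points of divergence are worth noting.

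First, a small overstatement: you assert that injectivity of $d\mathcal{P}$ is \emph{equivalent} to surjectivity of $\mu_\chi$ for \emph{every} nontrivial $\chi$. In fact the codifferential $\varphi$ of Proposition~\ref{dp} is surjective as soon as the image of a \emph{single} $\mu_\chi$ fills $H^0(C,\omega_C^{\otimes 2})$; the paper exploits exactly this, working with one $L_i$ from a reduced building data rather than all characters at once.

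Second, and more substantively, the gap you flag for odd $n$ is genuine if one argues only through parts (1) and (2) of the preceding proposition: the bound $\cl(C)\geq 2n-1$ is not implied by $g\geq 7$. The paper does \emph{not} route through that proposition. Instead it invokes \cite{LO}, Lemma~5.4 directly: for a general curve $C$ of genus $g\geq 7$ one has $\cl(C)\geq 3$, and under this hypothesis \cite{LO}, Lemma~5.4 yields that $\omega_C\otimes L_i$ is very ample for the (nontrivial, degree-zero) torsion bundle $L_i$, with no parity restriction on $n$. Butler's theorem \cite{Bu} then gives surjectivity of $\mu$, hence injectivity of $d\mathcal{P}$ at that point. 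So the ``refined surjectivity criterion'' you anticipate needing is precisely what the paper supplies by this citation; once you replace the appeal to parts (1)--(2) with the direct use of \cite{LO}, Lemma~5.4, your argument becomes the paper's.
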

\begin{proof}
Given a point $(C,f:\widetilde{C}\to C)\in R(G,g)$, it suffices to show that the differential $d\mathcal{P}$ is injective at $(C,f:\widetilde{C}\to C)$. This is equivalent to surjectivity of $\mu:H^0(C,\omega_{C}\otimes L_{\chi})\otimes
H^0(C,\omega_{C}\otimes L_{\chi^{-1}})\to H^0(C,\omega^{\otimes 2}_{C})$. Now take an $L_i$ from a reduced building data (\ref{reduced building data}). By \cite{Bu}, Theorem 1, this is satisfied if $\omega_{C}\otimes L_i$ is ample which follows from \cite{LO}, Lemma 5.4 and that the general curve of genus $g\geq 7$ satisfies
$\cl(C)\geq 3$.
\end{proof}
Now we treat the case of ramified Galois covers. Recall from Theorem \ref{reduced structure theorem} the reduced building data of the cover. Let $n_i$ and $d_i$ be as in \ref{reduced building data}. Then we have
\begin{proposition}
For $g\geq 2$ assume that there exists $n_i$ which is even and $d_i\geq 6$ or $n_i$ is odd and $d_i\geq 7$. Then the diffrential $d\mathcal{P}$ of the Prym map $\mathcal{P}:R(G,g,r)\to A_{p,\delta}$ at the point $((C,x_1,\dots, x_r),f:\widetilde{C}\to C)$ is injective.
\end{proposition}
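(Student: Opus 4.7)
The plan is to follow the strategy of the preceding two propositions: reduce the injectivity of $d\mathcal{P}$ to the surjectivity of a multiplication map on $C$ coming from a reduced building data, and then apply Butler's theorem (\cite{Bu}, Theorem 1).

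By Proposition \ref{dp} together with the decomposition \ref{multiplication abelian}, injectivity of $d\mathcal{P}$ at the given point is equivalent to surjectivity of
\[
\mu_\chi : H^0(C, \omega_C \otimes L_\chi) \otimes H^0(C, \omega_C \otimes L_{\chi^{-1}}) \to H^0(C, \omega_C^{\otimes 2}(D_{\chi, \chi^{-1}}))
\]
for every non-trivial $\chi \in G^*$. Using the fundamental relation \ref{fundamentalrelinverse} to write $L_{\chi^{-1}} \cong L_\chi^{-1}(D_{\chi, \chi^{-1}})$, the map $\mu_\chi$ becomes a standard multiplication of sections of two line bundles on $C$, with source
\[
H^0(\omega_C \otimes L_\chi) \otimes H^0(\omega_C \otimes L_\chi^{-1}(D_{\chi, \chi^{-1}}))
\]
and target $H^0(\omega_C^{\otimes 2}(D_{\chi, \chi^{-1}}))$.

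Now take a reduced building data $(\{L_i\},\{D_i\})$ from Theorem \ref{reduced structure theorem} and write $L_i = L_{\chi_i}$ for the generating characters. The relation $L_i^{n_i} \cong \mathcal{O}_C(\sum_j \lambda_{ij} x_j)$ forces $\deg L_i = d_i/n_i$. The hypothesis selects an index $i$ whose parity and size of $d_i$ pin down a lower bound for $\deg L_i$; the case split (even with $d_i \geq 6$ versus odd with $d_i \geq 7$) reflects the divisibility constraint $n_i \mid d_i$ imposed by integrality of $\deg L_i$ in the two parity cases. In either case, one verifies via \cite{LO}, Corollary 2.3 and Lemma 5.4 that both $\omega_C \otimes L_i$ and $\omega_C \otimes L_i^{-1}(D_{\chi_i, \chi_i^{-1}})$ are globally generated of sufficiently high degree to meet the hypotheses of \cite{Bu}, Theorem 1. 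The theorem then yields surjectivity of $\mu_{\chi_i}$, and surjectivity for the remaining characters follows from combining this with the tensor structure of the $L_\chi$ provided by the fundamental relations \ref{fundamentalrel1}.

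The main obstacle is the positivity verification in the ramified setting. Unlike the étale case, where $\deg L_i = 0$ and the symmetry between $L_i$ and $L_i^{-1}$ is manifest, the twist by $D_{\chi_i, \chi_i^{-1}}$ now breaks this symmetry: one must carefully balance the positive contribution $+d_i/n_i$ of $\deg L_i$ against the shift introduced by the effective divisor $D_{\chi_i, \chi_i^{-1}}$, in order to ensure simultaneously that both $\omega_C \otimes L_i$ and $\omega_C \otimes L_i^{-1}(D_{\chi_i, \chi_i^{-1}})$ meet the degree hypotheses of Butler's theorem. The parity split in the numerical assumption is exactly what is needed to handle this balance uniformly in $n_i$.
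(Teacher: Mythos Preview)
Your overall plan---reduce to a single multiplication map and apply Butler---matches the paper, but the character you select is not the one that works, and this is a genuine gap.

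You take $\chi=\chi_i$ itself, so that the relevant line bundle is $L_i$ with $\deg L_i = d_i/n_i$. Under the stated hypothesis this degree can be as small as $1$: for instance $n_i=6$, $d_i=6$ satisfies ``$n_i$ even and $d_i\geq 6$'' but gives $\deg L_i=1$, so $\omega_C\otimes L_i$ has degree $2g-1$ and neither the very-ampleness nor the combined-degree hypothesis of \cite{Bu}, Theorem~1 is available. Your sentence ``the hypothesis selects an index $i$ whose parity and size of $d_i$ pin down a lower bound for $\deg L_i$'' is therefore not correct: the hypothesis bounds $d_i$, not $d_i/n_i$. The subsequent claim that surjectivity for the remaining characters follows from the tensor relations is both unnecessary (surjectivity of the codifferential needs only one summand to surject) and unsubstantiated.

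The paper's device is exactly to avoid this by replacing $\chi_i$ with its ``half-power'': it works with the pair $\bigl(L_i^{\lfloor n_i/2\rfloor},\, L_i^{\,n_i-\lfloor n_i/2\rfloor}\bigr)$, whose degrees are $\lfloor n_i/2\rfloor\cdot d_i/n_i$ and $\lceil n_i/2\rceil\cdot d_i/n_i$, both close to $d_i/2$ regardless of $n_i$. For $n_i$ even one gets exactly $d_i/2\geq 3$; for $n_i$ odd the divisibility $n_i\mid d_i$ forces $d_i\geq 7$ to push up to the next multiple of $n_i$, and then $\lfloor n_i/2\rfloor\cdot d_i/n_i\geq 3$ as well. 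With both factors of degree $\geq 2g+1$ one invokes very ampleness and \cite{Bu}, Theorem~1 directly. This is the missing idea in your argument: the parity split governs the choice of the \emph{exponent} $\lfloor n_i/2\rfloor$, not a lower bound on $\deg L_i$ itself.
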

\begin{proof}
By our approach based on Proposition \ref{dp}, we will show that our hypotheses imply the surjectivity of $\mu:H^0(C,\omega_{C}\otimes L_{i}^{\lfloor\frac{n_i}{2}\rfloor})\otimes
H^0(C,\omega_{C}\otimes L_i^{n_i-\lfloor\frac{n_i}{2}\rfloor})\to H^0(C,\omega^{\otimes 2}_{C}(D_i))$, where $L_i$ belongs to a reduced building data. By \cite{Bu}, Theorem 1, this is the case if both bundles $\omega_{C}\otimes L_{i}^{\lfloor\frac{n_i}{2}\rfloor}$ and $\omega_{C}\otimes L_i^{n_i-\lfloor\frac{n_i}{2}\rfloor}$ are very ample. As $\deg(\omega_{C})=2g-2$, we only need to verify that $\deg(L_i^{\lfloor\frac{n_i}{2}\rfloor})\geq 3$ and $\deg(L_i^{n_i-\lfloor\frac{n_i}{2}\rfloor})\geq 3$. But $\deg(L_i^{\lfloor\frac{n_i}{2}\rfloor})\geq \lfloor\frac{d_i}{2}\rfloor\geq 3$. Proof of
$\deg(L_i^{n_i-\lfloor\frac{n_i}{2}\rfloor})\geq 3$ is analogous.
\end{proof}
One can formulate a condition without using the structure of a reduced building data. Let $d_{\chi}=\deg L_{\chi}$. 
\begin{proposition}
For $g\geq 2$ assume that there exists a character $\chi\in G^*$ such that $d_{\chi}\geq 3$ and $d_{\chi^{-1}}\geq 3$ or that $L_{\chi}$ and $L_{\chi^{-1}}$ have non-zero global sections and $d_{\chi}+d_{\chi^{-1}}\geq 5$. Then the diffrential $d\mathcal{P}$ of the Prym map $\mathcal{P}:R(G,g,r)\to A_{p,\delta}$ at the point $((C,x_1,\dots, x_r),f:\widetilde{C}\to C)$ is injective.
\end{proposition}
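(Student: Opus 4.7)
The plan is to apply Proposition \ref{dp}, whose identification of the codifferential of $\mathcal{P}$ with a multiplication map reduces the injectivity of $d\mathcal{P}$ at $((C,x_1,\dots,x_r),f:\widetilde{C}\to C)$ to the surjectivity of the canonical multiplication
\[\mu_\chi \colon H^0(C,\omega_C \otimes L_\chi) \otimes H^0(C,\omega_C \otimes L_{\chi^{-1}}) \to H^0(C,\omega_C^{\otimes 2}(D_{\chi,\chi^{-1}}))\]
for the hypothesized character $\chi$, in exact analogy with the two preceding propositions. From this point on the proof splits according to which of the two alternatives in the hypothesis is in force.

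In the first alternative ($d_\chi \geq 3$ and $d_{\chi^{-1}} \geq 3$), one simply computes
\[\deg(\omega_C \otimes L_\chi) = 2g-2+d_\chi \geq 2g+1, \qquad \deg(\omega_C \otimes L_{\chi^{-1}}) \geq 2g+1,\]
so that both factors are very ample line bundles. The surjectivity of $\mu_\chi$ then follows verbatim from \cite{Bu}, Theorem~1, applied exactly as in the previous proposition.

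In the second alternative, the nonzero-section assumption allows us to write $L_\chi \cong \sO_C(E_1)$ and $L_{\chi^{-1}} \cong \sO_C(E_2)$ with $E_1,E_2$ effective. Because $g\geq 2$, the canonical bundle $\omega_C$ is globally generated, and so are the two twists $\omega_C \otimes L_\chi \supseteq \omega_C$ and $\omega_C \otimes L_{\chi^{-1}} \supseteq \omega_C$. The bound $d_\chi + d_{\chi^{-1}} \geq 5$, combined with the fact that both degrees are non-negative, forces (after possibly interchanging $\chi$ and $\chi^{-1}$) $d_\chi \geq 3$, whence $\omega_C\otimes L_\chi$ is already very ample. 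One then invokes \cite{Bu}, Theorem~1 in the form where one factor is very ample and the other is globally generated, and the effectiveness of $L_{\chi^{-1}}$ is exactly what compensates for the possibly missing very ampleness of $\omega_C\otimes L_{\chi^{-1}}$.

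The main obstacle is the second case: if $d_{\chi^{-1}}\in\{0,1,2\}$, then $\omega_C\otimes L_{\chi^{-1}}$ need not be very ample, and a naive application of the Butler-type result used in the earlier propositions fails. The hypothesis that $L_{\chi^{-1}}$ has a nonzero section is introduced precisely to rescue global generation, and the combined degree bound is calibrated so that the multiplication map nevertheless surjects, which is the step requiring care.
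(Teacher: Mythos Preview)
Your treatment of the first alternative matches the paper's exactly: both $\omega_C\otimes L_\chi$ and $\omega_C\otimes L_{\chi^{-1}}$ have degree at least $2g+1$, hence are very ample, and \cite{Bu}, Theorem~1 gives the surjectivity of $\mu_\chi$.

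In the second alternative there is a genuine gap. Your inference that $\omega_C\otimes L_{\chi^{-1}}$ is globally generated ``because $\omega_C\subseteq\omega_C(E_2)$ and $\omega_C$ is globally generated'' is not valid. The inclusion $\omega_C\hookrightarrow\omega_C(E_2)$ is multiplication by a section vanishing on $E_2$, so every section of $\omega_C(E_2)$ arising this way vanishes along $E_2$; these sections cannot generate the sheaf at points of $E_2$. Concretely, if $d_{\chi^{-1}}=1$, say $L_{\chi^{-1}}\cong\sO_C(p)$, then Riemann--Roch gives $h^0(\omega_C(p))=g=h^0(\omega_C)$, so $p$ is a base point of $\omega_C(p)$ and $\omega_C\otimes L_{\chi^{-1}}$ is \emph{not} globally generated. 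Consequently the ``one factor very ample, the other globally generated'' form of Butler you wish to invoke is not available from your argument, and the last paragraph of your proposal (``the hypothesis \ldots\ is introduced precisely to rescue global generation'') rests on this faulty step.

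The paper's route in this case is different from yours. It does not isolate a very ample factor via your pigeonhole step $d_\chi\geq 3$; instead it records that the hypothesis forces $L_\chi$ and $L_{\chi^{-1}}$ to be globally generated and that $\deg(\omega_C\otimes L_\chi)+\deg(\omega_C\otimes L_{\chi^{-1}})=4g-4+d_\chi+d_{\chi^{-1}}\geq 4g+1$, and then appeals to the degree-sum form of \cite{Bu}. So even at the level of strategy your second case diverges from the paper: you try to upgrade one factor to very ample and keep the other merely globally generated, whereas the paper keeps both factors on equal footing and leans on the total degree bound.
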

\begin{proof}
The multiplication map takes the form $\mu:H^0(C,\omega_{C}\otimes L_{\chi})\otimes H^0(C,\omega_{C}\otimes L_{\chi^{-1}})\to H^0(C,\omega^{\otimes 2}_{C}(D_{\chi,\chi^{-1}}))$ and the first condtion implies that both $\omega_{C}\otimes L_{\chi}$ and $\omega_{C}\otimes L_{\chi^{-1}}$ are very ample. Now use \cite{Bu}, Theorem 1. The second condition implies that $L_{\chi}$ and $L_{\chi^{-1}}$ are globally generated and $\deg(\omega_{C}\otimes L_{\chi})+\deg(\omega_{C}\otimes L_{\chi^{-1}})\geq 4g+1$. Again \cite{Bu} implies that the multiplication map is surjective.
\end{proof}
\subsection{Prym map of metabelian covers}
In this subsection we invetigate the Prym map for metabelian Galois covers $f:\widetilde{C}\to C$. Recall the formula \ref{metabelian invariant}, $\pi_*\omega_X=\oplus (\omega_Y\otimes L^{\prime}\otimes U_{\chi})$, where $U_{\chi}=q_*\sF_{\chi}$ ($q$ is the finite map in the associated factorization  $f:
\widetilde{C}\xrightarrow{p}C_1\xrightarrow{q} C$ as at the beginning of \ref{Metabelian Galois covs}). \par Here the conditions are much more complicated because we should deal with vector bundles instead of line bundles. \par Let $\omega_C\otimes L^{\prime}$ have a non-zero global section. Furthermore suppose there exists a $U_{\chi}$ such that both $U_{\chi}$ and $U_{\chi^{-1}}$ are generated by global sections on $C$ and that 
\begin{equation} \label{condition metabelian} 
h^0(\omega_C^{\otimes 2}\otimes {L^{\prime}}^{\otimes 2}\otimes U_{\chi}\otimes U_{\chi^{-1}})\leq t(h^0(\omega_C\otimes {L^{\prime}}\otimes U_{\chi})+h^0(\omega_C\otimes {L^{\prime}}\otimes U_{\chi^{-1}}))-t^2.
\end{equation}
Where $t$ is the degree of the map $q$.
\begin{proposition}
Let $C$ satisfy the above conditions, then the differential $d\sP$ is injective at the point 
$[f:\widetilde{C}\to C]$. 
\end{proposition}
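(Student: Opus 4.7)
The plan is to follow the structure of the abelian-case arguments but replacing line bundles by the rank-$t$ vector bundles $U_\chi = q_*\sF_\chi$. By Proposition \ref{dp}, injectivity of $d\sP$ at $[f:\widetilde{C}\to C]$ is equivalent to surjectivity of the codifferential $\varphi:S^2(H^0(\widetilde{C},\omega_{\widetilde{C}})^-)\to H^0(C,\omega_C^2(B))$. Using the metabelian pushforward formula \ref{metabelian invariant}, I would first decompose the source as
\[H^0(\widetilde{C},\omega_{\widetilde{C}})^- = \bigoplus_{\chi\neq 1}H^0(C,\omega_C\otimes L'\otimes U_\chi),\]
so that $\varphi$ breaks into natural components
\[\mu_\chi:H^0(C,\omega_C\otimes L'\otimes U_\chi)\otimes H^0(C,\omega_C\otimes L'\otimes U_{\chi^{-1}})\to H^0(C,\omega_C^{\otimes 2}\otimes {L'}^{\otimes 2}\otimes U_\chi\otimes U_{\chi^{-1}}).\]
The proof then reduces to establishing surjectivity of the single $\mu_\chi$ singled out by the hypotheses, exactly as the earlier abelian propositions reduced to one multiplication map per character.

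To prove surjectivity of $\mu_\chi$, I would adapt the Castelnuovo--Mumford / Butler technique to the higher-rank setting. Global generation of $U_\chi$ (of rank $t$) gives an exact sequence
\[0\to M_\chi\to\sO_C^{\oplus h^0(U_\chi)}\to U_\chi\to 0,\]
and analogously for $U_{\chi^{-1}}$. Tensoring these resolutions with $\omega_C\otimes L'\otimes U_{\chi^{-1}}$ (respectively $\omega_C\otimes L'\otimes U_\chi$) and passing to cohomology produces an explicit lower bound of $t\bigl(h^0(\omega_C\otimes L'\otimes U_\chi)+h^0(\omega_C\otimes L'\otimes U_{\chi^{-1}})\bigr)-t^2$ for the dimension of the image of $\mu_\chi$; the assumption that $\omega_C\otimes L'$ has a non-zero global section is exactly what is needed to carry sections through the tensor factorization. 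The inequality \ref{condition metabelian} then forces the image of $\mu_\chi$ to fill out the entire target.

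The main obstacle is the vector bundle surjectivity criterion itself. In the abelian case one could invoke \cite{Bu}, Theorem 1 directly because the eigensheaves are line bundles, and a clean numerical criterion suffices. Here, the $U_\chi$ have rank $t$ and the analogous criterion must be assembled by iterating the Koszul-type sequence above and controlling the $H^1$ of the kernels $M_\chi\otimes(\omega_C\otimes L'\otimes U_{\chi^{-1}})$. The bookkeeping has to account for the fact that the rank enters the dimension count quadratically, which is precisely why the hypothesis takes the form \ref{condition metabelian}. Once the criterion is established, surjectivity of $\mu_\chi$ propagates to surjectivity of $\varphi$ onto the relevant summand of $H^0(C,\omega_C^2(B))$, and the injectivity of $d\sP$ at $[f:\widetilde{C}\to C]$ follows as in the preceding propositions.
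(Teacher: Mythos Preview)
Your proposal follows the same route as the paper: reduce via Proposition~\ref{dp} to surjectivity of the multiplication map $\mu_\chi$, establish the lower bound $t\bigl(h^0(\omega_C\otimes L'\otimes U_\chi)+h^0(\omega_C\otimes L'\otimes U_{\chi^{-1}})\bigr)-t^2$ on the dimension of its image, and conclude from condition~\ref{condition metabelian}. The only difference is cosmetic: the paper cites \cite{BB}, Theorem~2.1 as a black box for this dimension bound, whereas you sketch its proof directly via the evaluation sequences coming from global generation of $U_\chi$ and $U_{\chi^{-1}}$.
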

\begin{proof}
Our assumptios imply that the invertible sheaf $\omega_C\otimes L^{\prime}$ is generated by global sections. Since both $U_{\chi}$ and $U_{\chi^{-1}}$ are also globally generated by assumption, \cite{BB}, Theorem 2.1 implies that the image of the multiplication map
\[H^0(\omega_C\otimes {L^{\prime}}\otimes U_{\chi})\otimes H^0(\omega_C\otimes {L^{\prime}}\otimes U_{\chi^{-1}})\to H^0(\omega_C^{\otimes 2}\otimes {L^{\prime}}^{\otimes 2}\otimes U_{\chi}\otimes U_{\chi^{-1}})\]
has dimention $\geq t(h^0(\omega_C\otimes {L^{\prime}}\otimes U_{\chi})+h^0(\omega_C\otimes {L^{\prime}}\otimes U_{\chi^{-1}}))-t^2$. Condition \ref{condition metabelian} implies then that the multiplication is surjective and therefore the differential $d\mathcal{P}$ of the Prym map is injective at $[f:\widetilde{C}\to C]$.
\end{proof}

\end{document}